\newcommand{\ALGtikzmarkcolor}{black}
\newcommand{\ALGtikzmarkextraindent}{4pt}
\newcommand{\ALGtikzmarkverticaloffsetstart}{-.5ex}
\newcommand{\ALGtikzmarkverticaloffsetend}{-.5ex}
\newcounter{ALG@tikzmark@tempcnta}
\newcommand\ALG@tikzmark@start{%
    \global\let\ALG@tikzmark@last\ALG@tikzmark@starttext%
    \expandafter\edef\csname ALG@tikzmark@\theALG@nested\endcsname{\theALG@tikzmark@tempcnta}%
    \tikzmark{ALG@tikzmark@start@\csname ALG@tikzmark@\theALG@nested\endcsname}%
    \addtocounter{ALG@tikzmark@tempcnta}{1}%
}
\def\ALG@tikzmark@starttext{start}
\newcommand\ALG@tikzmark@end{%
    \ifx\ALG@tikzmark@last\ALG@tikzmark@starttext
    \else
        \tikzmark{ALG@tikzmark@end@\csname ALG@tikzmark@\theALG@nested\endcsname}%
        \tikz[overlay,remember picture] \draw[\ALGtikzmarkcolor] let \p{S}=($(pic cs:ALG@tikzmark@start@\csname ALG@tikzmark@\theALG@nested\endcsname)+(\ALGtikzmarkextraindent,\ALGtikzmarkverticaloffsetstart)$), \p{E}=($(pic cs:ALG@tikzmark@end@\csname ALG@tikzmark@\theALG@nested\endcsname)+(\ALGtikzmarkextraindent,\ALGtikzmarkverticaloffsetend)$) in (\x{S},\y{S})--(\x{S},\y{E});%
    \fi
    \gdef\ALG@tikzmark@last{end}%
}
\apptocmd{\ALG@beginblock}{\ALG@tikzmark@start}{}{\errmessage{failed to patch}}
\pretocmd{\ALG@endblock}{\ALG@tikzmark@end}{}{\errmessage{failed to patch}}
\newcommand*{\thead}[1]{\bfseries{#1}}
\renewcommand{\vec}[1]{\mbox{\boldmath $ #1 $}}
\newcommand{\norm}[1]{\left\lVert#1\right\rVert}
\newcommand{\OT}{\mathcal O}
\begin{document}

\pagestyle{headings}

\title{Iterative Implicit Methods for Solving Hodgkin-Huxley Type Systems}
\author{J\"urgen Geiser and Dennis Ogiermann}
\institute{Ruhr University of Bochum, \\
Institute of Theoretical Electrical Engineering, \\
Universit\"atsstrasse 150, D-44801 Bochum, Germany \\
\email{juergen.geiser@ruhr-uni-bochum.de}}
\maketitle

\begin{abstract}

We are motivated to approximate solutions of a Hodgkin-Huxley type model with implicit methods. 
As a representative we chose a psychiatric disease model containing stable as well as chaotic cycling behaviour.
We analyze the bifurcation pattern and show that some implicit methods help to preserve the limit cycles of such systems.
Further, we applied adaptive time stepping for the solvers to boost the accuracy, allowing us a preliminary zoom into the chaotic area of the system.

\end{abstract}

{\bf Keywords}: Hodgkin-Huxley Type model, iterative solver methods

{\bf AMS subject classifications.} 35K25, 35K20, 74S10, 70G65.

\section{Introduction}

We are motivated to model a nonlinear dynamic problem in neuroscience.
The most prominent system to describe the dynamics of neural cells is the Hodgkin Huxley model \cite{hodgkin1952}.
It is characteristic for this class of models to exhibit highly nonlinear oscillations in response to some external input \cite{chen2018}.
Sometimes we can observe chaotic oscillations, as for example in a small regime within the originally given parametrization of the Hodgkin and Huxley's model \cite{guck2002}.
Many subsequent models for biological oscillators have been either derived from this system or inspired by it. 
For details see \cite{post2016} and \cite{izhikevich2003}.

To study such delicate nonlinear dynamics, it is important to deal with stiff ODE solvers, which preserve the structure of the solution, see \cite{chen2018} and \cite{hairer2002}.
Based on the high quality of explicit and implicit time-integrators, which can be combined with conservation scheme, see \cite{trofimov2009}, we propose novel semi-implicit iterative methods, see \cite{geiser2015}.

The paper is outlined as follows. The model is introduced in Section \ref{modell}.
In Section \ref{methods}, we discuss the different numerical methods and present the
convergence analysis. The numerical experiments are done in Section \ref{numerics} and
the conclusion is presented in Section \ref{concl}.

\section{Mathematical Model}
\label{modell}
The classical Hodgkin-Huxley model is a parabolic partial differential equation with nonlinear reaction parts, see \cite{hodgkin1952}.
It models the dynamic behaviour of the the giant squid axon, which is a part of a neural cell.
Neural cells transfer information with the help of voltage peaks (so called action-potentials).
The voltage peaks base on the imbalance of the inner and outer ions and their diffusion, which is controlled by the potential difference across the cell's membrane.
The involved ions and channels are dependent on the type of neuron.

The standard Hodgkin-Huxley model is based on the flux of $Na^+$ and $K^+$ ions trough ion channels in the cell's membrane and proton pumps to provide a non-equilibrium environment.
Proton pumps move $Na^+$ ions out and the $K^+$ in by consuming ATP, forcing an imbalance of $Na^+$ and $K^+$ ions in the extracellular space (cell's outside) and intracellular space (cell's inside) respectively.
The activation state of these ion channels is controlled by voltage (potential) at the membrane. 
When enough voltage is present, then the fast $Na^+$ channels start to open, launching a diffusion-driven inflow of $Na^+$ ions from the extracellular space outside into the inner-cell, changing the cell's membrane potential towards positive values.
After a short time the $Na^+$ channels close and keep this closed state over a short time (they are called to be refractory). 
Slow $K^+$ channels open delayed to the fast $Na^+$ channels, such that we have an outflow of $K^+$ ions, parallel to the closing $Na^+$ channels. 
This mechanism again introduces a change in the potential, turning it back to the initial potential.
Putting these ideas together and taking into consideration, that the surface of neurons is geometrically rather complex and that the ion channels are not perfectly equal distributed over this surface, we can derive a partial differential equation to describe these spikes along an axon (see \cite{hodgkin1952} or \cite{izhikevich2003}):
\begin{align*}
C\frac{\partial V}{\partial t} &= I + D_m \frac{\partial^2 V}{\partial x^2} - \overbrace{\overline{g}_K n^4 (V - E_K)}^{I_K} - \overbrace{\overline{g}_{Na} m^3 h (V - E_{Na})}^{I_{Na}} - \overbrace{g_{L} (V - E_{L})}^{I_L} , \\
\frac{dn}{dt} &= \frac{n_\infty (V) - n}{\tau_n(V)} , \\
\frac{dm}{dt} &= \frac{m_\infty (V) - m}{\tau_m(V)} , \\
\frac{dh}{dt} &= \frac{h_\infty (V) - h}{\tau_h(V)} ,
\end{align*}
where $ D_m \frac{\partial^2 V}{\partial x^2} $ is the longitudinal conductivity.
$I_{Na}$ are $I_{K}$ are the natrium and kalium induced currents, $I_{L}$ is the leak current and $I$ is some externally applied current.
$n$ models the slow $K^+$ channel activation, while $m$ and $h$ describe the $Na^+$ channel activation and inactivation.
All parameters can be determined experimentally.

Further we have
\begin{align*}
n_\infty (V) &= \frac{\alpha_n (V)}{\alpha_n (V) + \beta_n (V)} , &
\tau_n (V) &= \frac{1}{\alpha_n (V) + \beta_n (V)} , \\
m_\infty (V) &= \frac{\alpha_m (V)}{\alpha_m (V) + \beta_m (V)} , &
\tau_m (V) &= \frac{1}{\alpha_m (V) + \beta_m (V)} , \\
h_\infty (V) &= \frac{\alpha_h (V)}{\alpha_h (V) + \beta_h (V)} , &
\tau_h (V) &= \frac{1}{\alpha_h (V) + \beta_h (V)} ,
\end{align*}
and the transition rates:
\begin{align*}
\alpha_n (V) &= 0.01 \frac{10-V}{exp \left( \frac{10-V}{V} \right) - 1} , &
\beta_n (V) &= 0.125 exp \left( \frac{-V}{80} \right) , \\
\alpha_m (V) &= 0.1 \frac{25 - V}{exp \left( \frac{20-V}{10} \right) - 1} , &
\beta_m (V) &= 4 exp \left( \frac{-V}{18} \right) , \\
\alpha_h (V) &= 0.07 exp \left( \frac{-V}{20} \right) , &
\beta_h (V) &= exp \left(\frac{30 - V}{10} \right) + 1 .
\end{align*}

This system is original Hodgkin-Huxley PDE \cite{hodgkin1952}.
Now assuming an ideal model of a neuron (more specifically its axon) as a cable, such that the spatial sizes are homgeneous and independent, we can reduce the model to a system of ordinary differential equations of the 
form (see also \cite{hodgkin1952}):
\begin{align*}
C\frac{dV}{dt} &= I - \overbrace{\overline{g}_K n^4 (V - E_K)}^{I_K} - \overbrace{\overline{g}_{Na} m^3 h (V - E_{Na})}^{I_{Na}} - \overbrace{g_{L} (V - E_{L})}^{I_L} , \\
\frac{dn}{dt} &= \frac{n_\infty (V) - n}{\tau_n(V)} , \\
\frac{dm}{dt} &= \frac{m_\infty (V) - m}{\tau_m(V)} , \\
\frac{dh}{dt} &= \frac{h_\infty (V) - h}{\tau_h(V)} .
\end{align*}

There also exist model-reductions of the HH model, mostly based on 2D ODEs (e.g. see \cite{izhikevich2007,post2016}).
One of the most famous one is the FitzHugh-Nagumo (FHN) model \cite{fitzhugh1969,nagumo1962}. 
Such models cannot show chaotic behaviour as a consequence of the Poincare-Bendixson theorem \cite{hirsch1974}.
The FHN model can also be interpreted as a generalisation of the Van-der-Pol Systems and is given as:
\begin{align*}
    \dot{V} &= V(a-V)(V-1)-w+I , \\
    \dot{w} &= bV - cw .
\end{align*}

\subsection{Hodgkin-Huxley Type Models}
To the best of our knowledge there exists no formal definition of which models exactly belong the class of Hodgkin-Huxley type systems.
Informally we refer to Hodgkin-Huxley type systems as differential equations as a special class of potentially nonlinear oscillating systems, where oscillations of an observable quantity are induced by the interplay with some independent but dynamic activations.
Closest to a definition of this class is the {\it generalized deterministic Hodgkin-Huxley equation} by Tim Austin \cite{austin2008}.
Based on the definition given from \cite{austin2008} and observations we propose the following definition for the class of Hodgkin-Huxley type (HHT) systems
\begin{align}
    \label{eq:generalization:hht-pde-o}
    \tau_o(o) \frac{d o}{d t} =& \nabla \cdot (D\nabla o) + f_o(o, {\bf a}) +  I,  \\ 
    \label{eq:generalization:hht-pde-i}
    \tau_i(o) \frac{d a_i}{d t} =& f_i(a_i, o) \quad \forall i \in \{1, \dots, n\},
\end{align}
which can be interpreted as a special case of reaction-diffusion systems.

Here $o$ describes an observable quantity, $I$ describes some external input function and $a_i$ are activation quantities. 
$f_o$ couples the observed quantity to the activations and may contain partial differential and integral operators, while the $f_i$'s describe analytic couplings of the activation back with the observable quantity.
From a modeling perspective we can sometimes an ideal case, where the spatial domain of PDE (\ref{eq:generalization:hht-pde-o}) is homgeneous and independent, such that the system reduces to the following ODE:
\begin{align}
    \label{eq:generalization:hht-ode-o}
    \tau_o(o) \frac{d o}{d t} =& f_o(o, {\bf a}) +  I,  \\  
    \label{eq:generalization:hht-ode-a}
    \tau_i(o) \frac{d a_i}{d t} =& f_i(a_i, o) \quad \forall i \in \{1, \dots, n\}.
\end{align}
This way our system contains naturally well-known ODEs used to model neural dynamics.
To the best of our knowledge this preliminary definition contains most systems which has been attributed as Hodgkin-Huxley typed in scientific literature so far.
We are aware that a special class of systems is not directly captured, the hybrid dynamical system models, since their solutions are discontinuous (to speed up computations of trajectories) \cite{izhikevich2010}, although the underlying continuous part of the system is.

\subsection{A Hodgkin-Huxley Type Nonlinear Disease Dynamics Model}

Trough this paper we deal with a HHT model appearing in neural modeling from neuroscience \cite{huber1998} and as the deterministic part for a stochastic disease model in neuropsychiatry \cite{huber2004}.
We chose this model as a representative system for the class of HHT models because it exhibits a rich amount behaviour in response to a constant input.
The model is given by the following system of ordinary differential equations
\begin{equation}
\label{eq:disease-model} 
\left.
\begin{aligned}
\tau_x \frac{d x}{d t} &= - x - \sum_{i \in \{he, li, le\}} a_i w_i (x -x_i) - a_{hi}^2 w_{hi} (x - x_{hi}) + S \\
\tau_i \frac{d a_i}{d t} &= F_i(x) - a_i \quad  \forall i \in \{ he, hi, le, li\}
\end{aligned}
\quad \right \}
\end{equation}
clearly fitting in the HHT class defined in the equations (\ref{eq:generalization:hht-ode-o}-\ref{eq:generalization:hht-ode-a}). 
We have x as an observable, where peaks represent events within the disease.
Further $\{ he, hi, le, li\}$ are the different activation types, operating on two time scales.
Elements starting with $h$ describe the fast time scale and model a high activation threshold, while elements starting with $l$ describe the slow time scale with low activation threshold respectively.
$e$ describes an excitatory and $i$ a corresponding inhibitory quantity. 
$F_i$ are sigmoidal functions of the form
\begin{align*}
 F_i(x) = \frac{1}{ 1 + \exp(- \Delta_i (x - \tilde{x}_{i}) )},
\end{align*}
where $\tilde{x}_{i}$ is the half-activation levels and $\Delta_i$ is the steepness of the sigmoidal function.
The fast excitatory quantity is assumed to activate instantaneously, so the model always has $\tau_{he} = 0$, implying $a_{he} = F_{he}(x)$.
As a consequence we also reduced the dimension of our dynamical system from 5 to 4.

\section{Numerical Methods}
\label{methods}

In the following we apply and discuss composition methods, as well as structure preserving methods based on finite difference and iterative schemes, which are also known to be successful in approximating solutions for various reaction-diffusion type equations.
We restrict us to composition methods, while also in the literature, there exists different other types of solver methods, e.g., tailored multi-step methods
as the Rush-Larson method, see \cite{perego2009}.

For notational simplicity we assume S to be time-independent such that the system gets autonomous. 
Further we introduce the following notation:
\begin{itemize}
\item ${\bf u} = (x, a_{hi}, a_{le}, a_{li})^T = (x, {\bf a})^T$ is the exact solution,
\item ${\bf u}^n = \left(x(t^n), a_{hi}(t^n), a_{le}(t^n), a_{li}(t^n)\right)^T = (x(t^n), {\bf a}^n)^T$ is defined as the solution at the time-point $t^n$,
\item Analogously ${\bf u}_i^{n} = (x_i(t^n), {\bf a}^n_i)^T$ is defined as the iterative solution of ${\bf u}$ in the $i$-th iterative step at the time-point $t^n$.
\end{itemize}
Bold letters indicate vectorial objects and $( \;\cdot\;)^T$ is the transpose.

\subsection{Composition with respect to Hamiltonian Systems}

If we apply a Van-der-Pol oscillator, which is a very simple Hodgkin-Huxley type system, we can reformulate the oscillator with respect to the non-stiff case into a Hamiltonian system and apply splitting approaches for the Hamiltonian systems.
The Van-der-Pol oscillator is given as:
\begin{align*}
    \frac{d x_1}{dt} &= x_2 , \\
     \frac{d x_2}{dt} &= \mu (1 - x_1^2) x_2 - x_1 ,
\end{align*}
where for $\mu = 0$, we obtain the harmonic oscillator with
the Hamiltonian system 
$$
H(x_1, x_2) = \frac{1}{2} (x_1^2 + x_2^2),
$$
although also other approaches are possible to uncover the systems hamiltonian \cite{shah2015}.
With these structural observations the idea is to apply such  composition methods, which are
known for the Hamiltonian system, i.e. Semi-implicit Euler scheme and St\"ormer-Verlet scheme \cite{hairer2002,hairer2003}), which are symplectic schemes if they are applied to a Hamiltonian system.

We introduce the following composition in operator notation for the disease model:
\begin{eqnarray}
\label{eq:disease-model:additive-splitting}
\frac{d {\bf u}}{d t} &&= {\bf F}({\bf u}) + {\bf S} = {\bf F}_1({\bf u}) + {\bf F}_2({\bf u}) + {\bf S},
\end{eqnarray}
where
\begin{eqnarray*}
&& {\bf F}_1({\bf u}) = \left( \begin{array}{c}
 \frac{- x - \sum_{i \in \{he, li, le\}} a_i w_i (x -x_i) - a_{hi}^2 w_{hi} (x - x_{hi})}{\tau_x},
      0,
      0,
      0  
      \end{array} \right)^T, \\
&& {\bf F}_2({\bf u}) = \left( \begin{array}{c}
       0,
       \frac{F_{hi}(x) - a_{hi}}{ \tau_{hi}},
       \frac{F_{le}(x) - a_{le}}{ \tau_{le}},
       \frac{F_{li}(x) - a_{li}}{ \tau_{li}}
       \end{array} \right), \;
{\bf S} = \left( \begin{array}{c}
      \frac{S}{\tau_x},
      0,
      0,
      0
 \end{array} \right)^T.
\end{eqnarray*}
Basing on this we define
\begin{align*}
f_1(x, {\bf a}) &= \frac{- x - \sum_{i \in \{he, li, le\}} a_i w_i (x -x_i) - a_{hi}^2 w_{hi} (x - x_{hi})}{\tau_x}, \\
{\bf f}_2(x, {\bf a}) &= \left( \frac{F_{hi}(x) - a_{hi}}{ \tau_{hi}}, \frac{F_{le}(x) - a_{le}}{ \tau_{le}}, \frac{F_{li}(x) - a_{li}}{ \tau_{li}} \right)^T,
\end{align*}
such that the algorithms are given as:
\begin{itemize}
\item Semi-implicit Euler scheme:
\begin{equation}
\label{eq:sie-scheme}
\left.
\begin{aligned}
x^{n+1} &= x^n + \Delta t \; f_1(x^n, {\bf a}^n) + \Delta t \; \frac{S}{\tau_x} \\
{\bf a}^{n+1} & = {\bf a}^n +  \Delta t \; {\bf f}_2(x^{n+1}, {\bf a}^{n+1})
\end{aligned}
\qquad \qquad \qquad \qquad \right \} 
\end{equation}
\item Störmer-Verlet scheme:
\begin{equation}
\label{eq:sv-scheme}
\left.
\begin{aligned}
x^{n+1/2} &= x^n + \frac{\Delta t}{2} \; f_1(x^n, {\bf a}^n) + \frac{\Delta t}{2} \; S, \\
{\bf a}^{n+1} &= {\bf a}^{n} +  \Delta t \; {\bf f}_2(x^{n+1/2}, {\bf a}^{n+1}) , \\
x^{n+1} &= x^{n+1/2} + \frac{\Delta t}{2} \; f_1(x^{n+1/2}, {\bf a}^{n+1}) + \frac{\Delta t}{2} \; \frac{S}{\tau_x}
\end{aligned}
\qquad \right \} 
\end{equation}
%

%
%

\end{itemize}

\begin{remark}
We can solve equations depending explicit on an $x$ and implicit on ${\bf a}$ directly, since the equations can be trivially rearranged on account of the linearity and independence on ${\bf a}$ in ${\bf f}_2$.
\end{remark}

\begin{remark}
For the semi-implicit Euler we have a global convergence order of $\OT(\Delta t)$ and for the St\"ormer-Verlet $\OT(\Delta t^2)$.
\end{remark}

\subsection{Iterative Schemes Based on Finite Difference Schemes}

We deal with the disease model, which is given as:

\begin{equation}
\label{eq:ode-hhm}
\left.
\begin{aligned}
&& \frac{d {\bf u}}{dt} = {\bf F}({\bf u})\\
&& {\bf u}(0) = {\bf u}^0
\end{aligned}
\qquad \right \} 
\end{equation}
%


%
%

We assume to deal with a system containing exactly one periodic orbit (in properly parameterized regime).
This implies there exists a $\tilde{t}>0$ such that for all points ${\bf u}_0$ starting on this orbit holds:
\[
    \norm{{\bf u}(0) - {\bf u}(\tilde{t})} = 0
\]
We call the smallest $\tilde{t}$ the period of an orbit.
We apply a semi-impicit Crank-Nicolson scheme (CN), see also \cite{trofimov2009},
which is conservative and given as:
\begin{equation}
\label{eq:cn-scheme}
{\bf u}^{n+1} =  {\bf u}^{n} + \frac{\Delta t}{2} \; \left( {\bf F}({\bf u}^{n+1})   + {\bf F}({\bf u}^{n}) \right) 
\end{equation}
%

Here, we have a nonlinear equation system, which have to apply additional nonlinear solvers, e.g. Newton's method.
Therefore, we propose iterative schemes, which embed via iterative step to the semi-implicit structures.

\begin{remark}
The semi-implicit CN method can be derived via operator-splitting approach:
\begin{eqnarray*}
\label{cfds_1_1}
&& \tilde{{\bf u}}^{n+1} =  {\bf u}^{n} + \frac{\Delta t}{2} \; {\bf F}({\bf u}^{n}) , \\
\label{cfds_1_2}
&& {\bf u}^{n+1} =  \tilde{{\bf u}}^{n+1} + \frac{\Delta t}{2} \; {\bf F}({\bf u}^{n+1}) ,
\end{eqnarray*}
where the first equation (\ref{cfds_1_1}) is explicit and can be done directly, the second one (\ref{cfds_1_2}) is implicit and solved with a fixpoint scheme as:
\begin{eqnarray*}
\label{cfds_1_3}
&& {\bf u}_i^{n+1} =  \tilde{{\bf u}}^{n+1} + \frac{\Delta t}{2} \; {\bf F}({\bf u}_{i-1}^{n+1}) , 
\end{eqnarray*}
where the starting condition is ${\bf u}_0^{n+1} = {\bf u}^{n}$ and we apply
$i = 1, \ldots, I$, while $I$ is an integer and we stop if we have the 
error bound $\norm{ {\bf u}_{i}^{n+1} - {\bf u}_{i-1}^{n+1}  }\le \varepsilon$ with $\varepsilon$ as an
error bound.
\end{remark}

\subsubsection{Semi-implicit Integrators}

In the following, we deal with semi-implicit integrators.
We introduce the following the following convention for intermediate results:
%
\begin{itemize}
\item We initialize the iterative scheme with the solution in time point $t^{n}$, i.e. ${\bf u}_0^{n+1} = {\bf u}^{n}$. 
\item We set the approximation for the next time point $t^{n+1}$ with the iterative solution in the $i$-th iterative step, i.e. ${\bf u}^{n+1} = {\bf u}_i^{n+1}$
 \item We will denote the splitting from equation (\ref{eq:disease-model:additive-splitting}) as follows:\\
$${\bf F}({\bf u}, {\bf v}) := {\bf F}_1({\bf u}) + {\bf F}_2({\bf v}) + {\bf S}$$
\end{itemize}

%
We compute the approximations ${\bf u}(t^n)$ at the time points  $n = 1, 2, 3, \ldots, N$ coupled with a fixed-point iteration, where $t^N = T$. 
The initialization of the iterative scheme is given with the initial condition of the equations (\ref{eq:ode-hhm}) as $u^{0,1} = u^0$. 
For now the time step is defined as $\Delta t := t^{n} - t^{n-1}$, while the error bound is given as 
$\varepsilon$.
Based on this information we define the first three solvers with algorithms (\ref{algo:isie}-\ref{algo:isv}).
\begin{algorithm}[h!]
\caption{Iterative Semi-implicit Euler (ISIE)}
\label{algo:isie}

\begin{algorithmic}[1]
    \Require Initial solution $u^0$, time step $\Delta t$, max time $T$, tolerance $\varepsilon$, max iterations $I$
    \Ensure Approximation $u(0), u(t^1), \dots , u(T)$ 
    \State $n \leftarrow 0$
    \Repeat
        \State ${\bf u}_0^{n+1} \leftarrow {\bf u}^{n}$, $i \leftarrow 0$
        \Repeat
            \State $i \leftarrow i + 1$
            \State ${\bf u}_i^{n+1} \leftarrow {\bf u}^n + \Delta t \; {\bf F}({\bf u}_{i-1}^{n+1}, {\bf u}_i^{n+1})$ \Comment{equations (\ref{eq:sie-scheme})}
        \Until{$i = I$ or $ \norm{ {\bf u}_i^{n+1} - {\bf u}_{i-1}^{n+1} } \le \varepsilon$} \Comment{stopping criterion}
        \State ${\bf u}^{n+1} \leftarrow {\bf u}_i^{n+1}$, $n \leftarrow n + 1$
    \Until{$n \Delta t > T$} \Comment{termination criterion}
\end{algorithmic}
\end{algorithm}

\begin{algorithm}[h!]
\caption{Iterative Crank-Nicolson (ICN)}
\label{algo:icn}
\begin{algorithmic}[1]
    \Require Initial solution $u^0$, time step $\Delta t$, max time $T$, tolerance $\varepsilon$, max iterations $I$
    \Ensure Approximation $u(0), u(t^1), \dots , u(T)$ 
    \State $n \leftarrow 0$
    \Repeat
        \State ${\bf u}_0^{n+1} \leftarrow {\bf u}^{n}$, $i \leftarrow 0$
        \Repeat
            \State $i \leftarrow i + 1$
            \State ${\bf u}_i^{n+1} \leftarrow {\bf u}^n + \frac{\Delta t}{2} \; \left( {\bf F}({\bf u}_{i-1}^{n+1}, {\bf u}^{i, n+1}) +  {\bf F}({\bf u}^{n}, {\bf u}^{n}) \right)$ \Comment{equations (\ref{eq:cn-scheme})}
        \Until{$i = I$ or $ \norm{ {\bf u}_i^{n+1} - {\bf u}_{i-1}^{n+1} } \le \varepsilon$} \Comment{stopping criterion}
        \State ${\bf u}^{n+1} \leftarrow {\bf u}_i^{n+1}$, $n \leftarrow n + 1$
    \Until{$n \Delta t > T$} \Comment{termination criterion}
\end{algorithmic}
\end{algorithm}

\begin{algorithm}[h!]
\caption{Iterative Störmer-Verlet (ISV)}
\label{algo:isv}
\begin{algorithmic}[1]
    \Require Initial solution $u^0$, time step $\Delta t$, max time $T$, tolerance $\varepsilon$, max iterations $I$
    \Ensure Approximation $u(0), u(t^1), \dots , u(T)$ 
    \State $n \leftarrow 0$
    \Repeat
        \State ${\bf u}_0^{n+1} \leftarrow {\bf u}^{n}$, $i \leftarrow 0$
        \Repeat
            \State $i \leftarrow i + 1$
            \State $x^{n+1/2} \leftarrow x^n + \frac{\Delta t}{2} f_1(x^n, {\bf a}^n) + \frac{\Delta t}{2} S$
            \State ${\bf a}^{n+1} \leftarrow {\bf a}^{n} +  \Delta t \; {\bf f}_2(x^{n+1/2}, {\bf a}^{n+1})$ \Comment{equations (\ref{eq:sv-scheme})}
            \State $x^{n+1} \leftarrow x^{n+1/2} + \frac{\Delta t}{2} \; f_1(x^{n+1/2}, {\bf a}^{n+1}) + \frac{\Delta t}{2} \; \frac{S}{\tau_x}$
        \Until{$i = I$ or $ \norm{ {\bf u}_i^{n+1} - {\bf u}_{i-1}^{n+1} } \le \varepsilon$} \Comment{stopping criterion}
        \State ${\bf u}^{n+1} \leftarrow {\bf u}^{i,n+1}$, $n \leftarrow n + 1$
    \Until{$n \Delta t > T$} \Comment{termination criterion}
\end{algorithmic}
\end{algorithm}

\begin{remark}
The semi-implicit CN scheme based on the iterative approach is asymptotical conservative \cite{geiser2019}.
\end{remark}

Further we define two multipredictor multicorrector methods with algorithms (\ref{algo:mmrk4}) and (\ref{algo:irk4}).

\begin{algorithm}[h!]
\caption{Multipredictor Multicorrector Runge-Kutta-4 (MMRK4)}
\label{algo:mmrk4}
\begin{algorithmic}[1]
    \Require Initial solution $u^0$, time step $\Delta t$, max time $T$, tolerance $\varepsilon$, max iterations $I$
    \Ensure Approximation $u(0), u(t^1), \dots , u(T)$ \State $n \leftarrow 0$
    \Repeat
        \State ${\bf \tilde{u}}^{n+\frac{1}{2}} \leftarrow {\bf u}^n + \frac{\Delta t}{2} {\bf F}({\bf u}^n )$ \Comment{predictor (forward Euler)}
        \State ${\bf \hat{u}}^{n+\frac{1}{2}} \leftarrow {\bf u}^{n} + \frac{\Delta t}{2} \; {\bf F}( {\bf \tilde{u}}^{n+\frac{1}{2}})$ \Comment{corrector (backward Euler)}
        \State ${\bf \tilde{u}}^{n+1} \leftarrow {\bf u}^{n} + \Delta t \; {\bf F} ({\bf \hat{u}}^{n+\frac{1}{2}})$ \Comment{predictor (midpoint rule)}
        \State ${\bf u}^{n+1} \leftarrow {\bf u}^n + \frac{\Delta t}{6} \left( {\bf F}( {\bf u}^n) + 2 {\bf F}( {\bf \tilde{u}}^{n+\frac{1}{2}}) + 2 {\bf F}( {\bf \hat{u}}^{n+\frac{1}{2}}) + {\bf F}( {\bf \tilde{u}}^{n+1}) \right)$ \Comment{corrector (Simpson rule)}
        \State $n \leftarrow n + 1$
    \Until{$n \Delta t > T$} \Comment{termination criterion}
\end{algorithmic}
\end{algorithm}

\begin{algorithm}[h!]
\caption{Iterative Runge-Kutta-4 (IRK4)}
\label{algo:irk4}
\begin{algorithmic}[1]
    \Require Initial solution $u^0$, time step $\Delta t$, max time $T$, tolerance $\varepsilon$, max iterations $I$ and $J$
    \Ensure Approximation $u(0), u(t^1), \dots , u(T)$ \State $n \leftarrow 0$
    \Repeat
        \State ${\bf u}_0^{n+1} \leftarrow {\bf u}^{n}$, $i \leftarrow 0$, $j \leftarrow 0$
        \Repeat
            \State $i \leftarrow i + 1$
            \State ${\bf \tilde{u}}_i^{n+\frac{1}{2}} = {\bf u}^n + \frac{\Delta t}{4} \left ( {\bf F}({\bf u}^n ) +  {\bf F}({\bf \tilde{u}}_{i-1}^{n+\frac{1}{2}} ) \right)$ \Comment{predictor (Crank-Nicolson)}
        \Until{$i = I$ or $ ||{ {{\bf\tilde{u}}}_i^{n+\frac{1}{2}} - {\bf \tilde{u}}_{i-1}^{n+\frac{1}{2}}}|| \le \varepsilon$} \Comment{stopping criterion}
        \Repeat
            \State $j \leftarrow j + 1$
            \State ${\bf u}_j^{n+1} = {\bf u}^n + \frac{\Delta t}{6} \left( {\bf F}({\bf u}^n) + 4 {\bf F}( {\bf\tilde{u}}_i^{n+\frac{1}{2}}) + {\bf F}({\bf u}_{j-1}^{n+1}) \right)$ \Comment{corrector (Simpson rule)}
        \Until{$j = J$ or $ \norm{ {\bf u}_i^{n+1} - {\bf u}_{i-1}^{n+1} } \le \varepsilon$} \Comment{stopping criterion}
        \State ${\bf u}^{n+1} \leftarrow {\bf u}_j^{n+1}$, $n \leftarrow n + 1$
    \Until{$n \Delta t > T$} \Comment{termination criterion}
\end{algorithmic}
\end{algorithm}

\FloatBarrier
\subsection{Adaptive Time Step Control of the Iterative CN Scheme}

To improve the numerical results in the critical time-scales (i.e. the stiff parts of the evolution equation) we apply adaptive time step approaches.
We define the following norms:

\begin{itemize}
\item Absolute norm:
\begin{eqnarray}
\norm{{\bf u}^n} = \sqrt{x(t^n)^2 + a_{he}(t^n)^2 + a_{li}(t^n)^2 +  a_{le}(t^n)^2}
\end{eqnarray}
%
\item Maximum-norm:
%
\begin{eqnarray}
\norm{{\bf u}^n}_{max} = \max \left\{ |x(t^n)|, |a_{he}(t^n)|, |a_{li}(t^n)|, |a_{le}(t^n)| \right\}
\end{eqnarray}
\end{itemize}

The relative error is given as:
\begin{eqnarray}
e(t^n) = \frac{\norm{{\bf u}^{n+1} -{\bf u}^n  }}{\norm{ {\bf u}^{n+1} }} .
\end{eqnarray}

\subsubsection{PID-Controller}
We apply the following simple error-estimate (see \cite{kuzmin2011}), where we compute the time step for a given tolerance $\varepsilon$ at a timepoint $t^n$:
\begin{eqnarray}
\label{eq:pid-controller}
\Delta t^{n+1} = \left(\frac{e(t^{n-1})}{e(t^n)}\right)^{k_P}  \left(\frac{\varepsilon}{e(t^n)}\right)^{k_I}  \left(\frac{e^2(t^{n-1})}{e(t^n) e(t^{n-2})}\right)^{k_D} \Delta t^{n} ,     
\end{eqnarray}
where we assume the emprical PID (Proportional-Integral-Differential) parameters $k_P = 0.075, k_I = 0.175, \; k_D = 0.01$.
For the initialisation, means for $n=1$, we only apply the $I$ part, while for $n=2$ we apply the $I$ and $P$ part and for all later time steps (where we have all the parts $e(t^{n-2}), e(t^{n-1}), e(t^{n-2})$), we apply $I, P, D$.

\begin{algorithm}[h!]
\begin{algorithmic}[1]
\label{algo:pidicn}
\caption{Proportional-Integral-Differential-Controlled Iterative Crank-Nicolson (PIDICN)}
\Require Initial solution $u^0$, initial time step $\Delta t^0$, max time $T$, fixed-point iteration tolerance $\varepsilon_{fp}$, time controller tolerance $\varepsilon_{t}$, max iterations $I$ and $J$
    \Ensure Approximation $u(0), u(t^1), \dots , u(T)$ 
    \State $n \leftarrow 0$
    \State $\Delta t \leftarrow \Delta t^0$
    \Repeat
        \State ${\bf u}_0^{n+1} \leftarrow {\bf u}^{n}$, $i \leftarrow 0$
        \Repeat
            \State $i \leftarrow i + 1$
            \State ${\bf u}_i^{n+1} \leftarrow {\bf u}^n + \frac{\Delta t}{2} \; \left( {\bf F}({\bf u}_{i-1}^{n+1}, {\bf u}_i^{n+1}) +  {\bf F}({\bf u}^{n}, {\bf u}^{n}) \right)$ \Comment{equations (\ref{eq:cn-scheme})}
        \Until{$i = I$ or $ \norm{ {\bf u}_i^{n+1} - {\bf u}_{i-1}^{n+1} } \le \varepsilon_{fp}$} \Comment{stopping criterion}
        \State $\Delta t \leftarrow \left(\frac{e(t^{n-1})}{e(t^n)}\right)^{k_P}  \left(\frac{\varepsilon_{t}}{e(t^n)}\right)^{k_I}  \left(\frac{e^2(t^{n-1})}{e(t^n) e(t^{n-2})}\right)^{k_D} \Delta t$ \Comment{equation (\ref{eq:pid-controller})}
        \State ${\bf u}^{n+1} \leftarrow {\bf u}_i^{n+1}$, $n \leftarrow n + 1$, $t^{n+1} \leftarrow t^n + \Delta t$
    \Until{$t^{n+1} > T$} \Comment{termination criterion}
\end{algorithmic}
\end{algorithm}

\clearpage
\subsubsection{Classical Time Step Controller for the ICN}

We apply an additional automatic time step control which is given as following with a two scale ansatz, where we compute an approximation via large step $\Delta t$ and compare the solution with m consecutive substeps of length $\frac{\Delta t}{m}$ to give another approximation, which should be close to the large step if the approximator is accurate enough, given the current time step.
Solutions are rejected until the time step is small enough, which implies the approximation error is smaller than some bound.
We apply the following time step controller for second order schemes:
\begin{equation}
\label{eq:classical-controller}
\Delta t^* = \sqrt{\varepsilon \frac{\Delta t^2 (m^2 -1)}{\norm{{\bf u}_{\Delta t} - {\bf u}_{m \Delta t} }}} ,
\end{equation}
where $\Delta t^*$ is the optimal time step while ${\bf u}_{\Delta t}$ is the approximation by applying m small time steps an ${\bf u}_{m\Delta t}$ is the solution of an equivalent length large time step.

\begin{algorithm}[h!]
\begin{algorithmic}[1]
\label{algo:aicn}
\caption{Adaptive Iterative Crank-Nicolson (AICN)}
\Require Initial solution $u^0$, initial time step $\Delta t^0$, max time $T$, fixed-point iteration tolerance $\varepsilon_{fp}$, time controller tolerance $\varepsilon_{t}$, max iterations $I$
    \Ensure Approximation $u(0), u(t^1), \dots , u(T)$ 
    \State $n \leftarrow 0$, $\Delta t^* \leftarrow \Delta t^0$, $\Delta t^* \leftarrow \Delta t^0$
    \Repeat
        \State ${\bf u}_0^{n+1} \leftarrow {\bf u}^{n}$, $i \leftarrow 0$, $\Delta t \leftarrow \Delta t^*$
        \Repeat
            \State $i \leftarrow i + 1$
            \State ${\bf u}_i^{n+1} \leftarrow {\bf u}^n + \frac{\Delta t}{2} \; \left( {\bf F}({\bf u}_{i-1}^{n+1}, {\bf u}_i^{n+1}) +  {\bf F}({\bf u}^{n}, {\bf u}^{n}) \right)$ \Comment{equations (\ref{eq:cn-scheme})}
        \Until{$i = I$ or $ \norm{ {\bf u}_i^{n+1} - {\bf u}_{i-1}^{n+1} } \le \varepsilon_{fp}$} \Comment{stopping criterion}
        \State Compute ${\bf v}_i^{n+1}$ by applying the previous loop m times with time step $\frac{\Delta t}{m}$
        \State $\Delta t^* \leftarrow \sqrt{ \varepsilon_t \frac{\Delta t^2 (m^2 -1)}{\norm{{\bf u}_i^{n+1} - {\bf v}_i^{n+1} }} } $ \Comment{equation (\ref{eq:classical-controller})}
        \If{$\Delta t \leq \Delta t^*$}  \Comment{Reject approximation until "good enough"}     
        \State ${\bf u}^{n+1} \leftarrow {\bf u}_i^{n+1}$, $n \leftarrow n + 1$, $t^{n+1} \leftarrow t^n + \Delta t$
        \EndIf
    \Until{$t^{n+1} > T$} \Comment{termination criterion}
\end{algorithmic}
\end{algorithm}

\subsection{Time Step Controller for the Runge-Kutta Methods}

We extend the multipredictor-multicorrector algorithm of order $4$, 
see Algorithm (\ref{algo:mmrk4}) and an iterative CN+Simpson-Rule of order $4$, see Algorithm (\ref{algo:irk4}):

\begin{lemma}

We deal with 4th order time-integrator methods with tolerance $\varepsilon$.
Further, we assume that we have a 4th order numerical solver, which is
give as  ${\bf u}(t + \Delta t) = A_{\Delta t} \; {\bf u}(t)$ and
${\bf u}(t)$ is the exact solution at time $t$. We apply the $|| \cdot||_p$-norm
as a given vector norm, e.g., in the Banach-space.
 
Then the adaptive time stepping is given as:
\begin{eqnarray}
\label{eq:optimal-timestep-4thorder}
\Delta t^* = \left( \varepsilon \frac{\Delta t^4 (m^4 -1)}{\norm{{\bf u}_{\Delta t} - {\bf u}_{m \Delta t} }_2} \right)^{1/4} .
\end{eqnarray}

\end{lemma}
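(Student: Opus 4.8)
The plan is to obtain (\ref{eq:optimal-timestep-4thorder}) by the same two-scale (Richardson-type) argument that produced the second-order controller (\ref{eq:classical-controller}), now using that the one-step propagator $A_{\Delta t}$ is accurate to order $p=4$. First I would compare the two approximations against the exact solution ${\bf u}_{\mathrm{ex}}$ at the end of a common comparison interval: the fine value ${\bf u}_{\Delta t}$, obtained from $m$ substeps of length $\Delta t$, and the coarse value ${\bf u}_{m\Delta t}$, obtained from a single step of length $m\,\Delta t$. Since the scheme has order four, in the asymptotic (small-step) regime both carry a leading error term of the form ${\bf C}\,h^4$ with one and the same vector-valued coefficient ${\bf C}$, namely
\[
{\bf u}_{\Delta t} = {\bf u}_{\mathrm{ex}} + {\bf C}\,(\Delta t)^4 + \OT((\Delta t)^5), \qquad {\bf u}_{m\Delta t} = {\bf u}_{\mathrm{ex}} + {\bf C}\,(m\,\Delta t)^4 + \OT((\Delta t)^5).
\]

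Next I would eliminate the unknown exact value ${\bf u}_{\mathrm{ex}}$ by subtracting the two expansions, which is the whole point of the two-scale comparison. This yields
\[
{\bf u}_{\Delta t} - {\bf u}_{m\Delta t} = {\bf C}\,(\Delta t)^4\,(1 - m^4) + \OT((\Delta t)^5),
\]
so that, taking the norm and discarding higher-order terms,
\[
\norm{{\bf u}_{\Delta t} - {\bf u}_{m\Delta t}}_2 = \norm{{\bf C}}_2\,(\Delta t)^4\,(m^4 - 1).
\]
Solving this identity for the otherwise inaccessible coefficient gives a computable surrogate for the leading error constant, $\norm{{\bf C}}_2 = \norm{{\bf u}_{\Delta t} - {\bf u}_{m\Delta t}}_2 / \big((m^4 - 1)(\Delta t)^4\big)$, expressed purely in terms of the two numerical outputs.

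Finally I would impose the accuracy requirement. The genuine error of the fine approximation is $\norm{{\bf u}_{\Delta t} - {\bf u}_{\mathrm{ex}}}_2 = \norm{{\bf C}}_2\,(\Delta t)^4$, and the optimal step $\Delta t^*$ is defined as the step for which this leading error just meets the tolerance, i.e. $\norm{{\bf C}}_2\,(\Delta t^*)^4 = \varepsilon$. Substituting the surrogate for $\norm{{\bf C}}_2$ and solving gives
\[
(\Delta t^*)^4 = \frac{\varepsilon}{\norm{{\bf C}}_2} = \frac{\varepsilon\,(m^4-1)\,(\Delta t)^4}{\norm{{\bf u}_{\Delta t} - {\bf u}_{m\Delta t}}_2},
\]
which is exactly (\ref{eq:optimal-timestep-4thorder}) after taking the fourth root; the derivation is the verbatim analogue of (\ref{eq:classical-controller}) with the exponent $2$ replaced by the order $p=4$ and $m^2-1$ by $m^4-1$.

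The step I expect to be the genuine content, rather than routine algebra, is justifying the asymptotic error model itself: that both step sizes share the same leading coefficient ${\bf C}$ and that the $\OT((\Delta t)^5)$ remainders are negligible, so that the computed difference $\norm{{\bf u}_{\Delta t} - {\bf u}_{m\Delta t}}_2$ is a faithful proxy for $\norm{{\bf C}}_2\,(\Delta t)^4$. This holds only once $\Delta t$ is small enough to lie in the asymptotic regime of the order-four scheme, and it is also the place where the global order-$p$ scaling, as opposed to the per-step order-$(p+1)$ local error, is what fixes the fourth root. I would therefore state these as the standing hypotheses of the lemma and remark that outside the asymptotic regime the formula is only a heuristic estimate, exactly as for the classical controller.
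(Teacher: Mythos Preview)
Your argument is correct and is essentially the same Richardson-type two-scale derivation the paper gives: expand both the coarse and the fine approximation about the exact solution with a common leading error coefficient, subtract to eliminate the exact value and isolate that coefficient from the computable difference, then impose the tolerance condition $\norm{{\bf C}}_2(\Delta t^*)^4=\varepsilon$ and solve for $\Delta t^*$. The only cosmetic difference is labeling (the paper's proof writes the pair as $\Delta t$ versus $\Delta t/m$ rather than $m\Delta t$ versus $\Delta t$) and the order of the neglected remainder; your added remarks on when the shared-coefficient assumption is justified are a welcome clarification the paper leaves implicit.
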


\begin{proof}

We assume $\norm{{\bf u} - {\bf u}_{\Delta t}} = \varepsilon$, which is
a prescribed tolerance.

We apply 2 different time-steps:
\begin{itemize}
\item A single large time-step $\Delta t$ with:
\begin{eqnarray*}
{\bf u}_{\Delta t}(t^n) = {\bf u} + A_{\Delta t} {\bf u}(t^{n-1})  ,
\end{eqnarray*}
\item A multiple small time-step $\Delta t /m$ with:
\begin{eqnarray*}
{\bf u}_{\Delta t/m}(t^n) =  {\bf u} + A_{\Delta t/m}^m {\bf u}(t^{n-1}),
\end{eqnarray*}

\end{itemize}

The local truncation error is given as:
\begin{eqnarray*}
{\bf u}_{\Delta t} = {\bf u} + \Delta t^4 e({\bf u}) + \OT(\Delta t^6) , \\
{\bf u}_{\Delta t/m} = {\bf u} + (\Delta t/m)^4 e({\bf u}) + \OT(\Delta t^6) ,
\end{eqnarray*}
and we assume to have the approximation:
\[
\norm{\frac{\vec{u}(t^n) - \vec{u}_{\Delta t^*}(t^n)}{{\Delta t^*}^4(0-1)}}_2
\approx 
\norm{\frac{\vec{u}_{\Delta t}(t^n) - \vec{u}_{\Delta t/m}(t^n)}{\Delta t^4(1-m^4)}}_2
\]

which can be interpreted as a scaling of the error estimates.

Using the norm property we can now pull out the divisors:

\[
\frac{\norm{\vec{u}(t^n) - \vec{u}_{\Delta t^*}(t^n)}_2}{\norm{{\Delta t^*}^4(0-1)}_1}
\approx 
\frac{\norm{\vec{u}_{\Delta t}(t^n) - \vec{u}_{\Delta t/m}(t^n)}_2}{\norm{\Delta t^4(1-m^4)}_1}
\]

we can simplify the divisors:

\[
\frac{\norm{\vec{u}(t^n) - \vec{u}_{\Delta t^*}(t^n)}_2}{{\Delta t^*}^4}
\approx 
\frac{\norm{\vec{u}_{\Delta t}(t^n) - \vec{u}_{\Delta t/m}(t^n)}_2}{\Delta t^4(m^4-1)}
\]

we assumed $\norm{\vec{u}(t^n) - \vec{u}_{\Delta t^*}(t^n)}_2 = \varepsilon$, which is our error control, such that we obtain the following crude approximation:

\[
\frac{\varepsilon}{{\Delta t^*}^4}
\approx 
\frac{\norm{\vec{u}_{\Delta t}(t^n) - \vec{u}_{\Delta t/m}(t^n)}_2}{\Delta t^4(m^4-1)}
\Leftrightarrow \Delta t^* \approx \sqrt[4]{\frac{\Delta t^4(m^4-1)}{\varepsilon \norm{\vec{u}_{\Delta t}(t^n) - \vec{u}_{\Delta t/m}(t^n)}_2}}
\]

Then, the adaptive time stepping is given as:
\begin{eqnarray*}
\Delta t^* = \left( \varepsilon \frac{\Delta t^4 (m^4 -1)}{\norm{{\bf u}_{\Delta t} - {\bf u}_{\Delta t/m} }_2} \right)^{1/4} 
\end{eqnarray*}

\end{proof}

The improved automatic time step controlled $4$-th order methods are now given with algorithms (\ref{algo:ark4}) and (\ref{algo:airk4}).

\begin{algorithm}
\caption{Multipredictor Multicorrector Runge-Kutta-4 (ARK4)}
\label{algo:ark4}
\begin{algorithmic}[1]
\Require Initial solution $u^0$, initial time step $\Delta t^0$, max time $T$, time controller tolerance $\varepsilon_{t}$, max iterations $I$
    \Ensure Approximation $u(0), u(t^1), \dots , u(T)$ 
    \State $n \leftarrow 0$, $\Delta t^* \leftarrow \Delta t^0$, $\Delta t^* \leftarrow \Delta t^0$
    \Repeat
        \State $\Delta t \leftarrow \Delta t^*$
        \State ${\bf \tilde{u}}^{n+\frac{1}{2}} \leftarrow {\bf u}^n + \frac{\Delta t}{2} {\bf F}({\bf u}^n )$ \Comment{predictor (forward Euler)}
        \State ${\bf \hat{u}}^{n+\frac{1}{2}} \leftarrow {\bf u}^{n} + \frac{\Delta t}{2} \; {\bf F}( {\bf \tilde{u}}^{n+\frac{1}{2}})$ \Comment{corrector (backward Euler)}
        \State ${\bf \tilde{u}}^{n+1} \leftarrow {\bf u}^{n} + \Delta t \; {\bf F} ({\bf \hat{u}}^{n+\frac{1}{2}})$ \Comment{predictor (midpoint rule)}
        \State ${\bf u}^{n+1} \leftarrow {\bf u}^n + \frac{\Delta t}{6} \left( {\bf F}( {\bf u}^n) + 2 {\bf F}( {\bf \tilde{u}}^{n+\frac{1}{2}}) + 2 {\bf F}( {\bf \hat{u}}^{n+\frac{1}{2}}) + {\bf F}( {\bf \tilde{u}}^{n+1}) \right)$ \Comment{corrector (Simpson rule)}
        \State Compute ${\bf v}^{n+1}$ by applying the previous scheme m times with time step $\frac{\Delta t}{m}$
        \State $\Delta t^* \leftarrow \sqrt[4]{ \varepsilon_t \frac{\Delta t^4 (m^4 -1)}{\norm{{\bf u}^{n+1} - {\bf v}^{n+1} }} } $ \Comment{equation (\ref{eq:optimal-timestep-4thorder})}
        \If{$\Delta t \leq \Delta t^*$}  \Comment{Reject approximation until "good enough"}     
        \State ${\bf u}^{n+1} \leftarrow {\bf u}_i^{n+1}$, $n \leftarrow n + 1$, $t^{n+1} \leftarrow t^n + \Delta t$
        \EndIf
    \Until{$t^{n+1} > T$} \Comment{termination criterion}
\end{algorithmic}
\end{algorithm}

\begin{algorithm}
\caption{Adaptive Iterative Runge-Kutta-4 (AIRK4)}
\label{algo:airk4}
\begin{algorithmic}[1]
\Require Initial solution $u^0$, initial time step $\Delta t^0$, max time $T$,fixed-point iteration tolerance $\varepsilon_{fp}$, time controller tolerance $\varepsilon_{t}$, max iterations $I$
    \Ensure Approximation $u(0), u(t^1), \dots , u(T)$ 
    \State $n \leftarrow 0$, $\Delta t^* \leftarrow \Delta t^0$, $\Delta t^* \leftarrow \Delta t^0$
    \Repeat
        \State $\Delta t \leftarrow \Delta t^*$
        \State ${\bf u}_0^{n+1} \leftarrow {\bf u}^{n}$, $i \leftarrow 0$, $j \leftarrow 0$
        \Repeat
            \State $i \leftarrow i + 1$
            \State ${\bf \tilde{u}}_i^{n+\frac{1}{2}} = {\bf u}^n + \frac{\Delta t}{4} \left ( {\bf F}({\bf u}^n ) +  {\bf F}({\bf \tilde{u}}_{i-1}^{n+\frac{1}{2}} ) \right)$ \Comment{predictor (Crank-Nicolson)}
        \Until{$i = I$ or $ ||{ {{\bf\tilde{u}}}_i^{n+\frac{1}{2}} - {\bf\tilde{u}}}||_{i-1}^{n+\frac{1}{2}} \le \varepsilon_{fp}$} \Comment{stopping criterion}
        \Repeat
            \State $j \leftarrow j + 1$
            \State ${\bf u}_j^{n+1} = {\bf u}^n + \frac{\Delta t}{6} \left( {\bf F}({\bf u}^n) + 4 {\bf F}( {\bf\tilde{u}}_i^{n+\frac{1}{2}}) + {\bf F}({\bf u}_{j-1}^{n+1}) \right)$ \Comment{corrector (Simpson rule)}
        \Until{$j = J$ or $ \norm{ {\bf u}_i^{n+1} - {\bf u}_{i-1}^{n+1} } \le \varepsilon_{fp}$} \Comment{stopping criterion}
        \State Compute ${\bf v}_j^{n+1}$ by applying the previous scheme m times with time step $\frac{\Delta t}{m}$
        \State $\Delta t^* \leftarrow \sqrt[4]{ \varepsilon_t \frac{\Delta t^4 (m^4 -1)}{\norm{{\bf u}_j^{n+1} - {\bf v}_j^{n+1} }} } $ \Comment{equation (\ref{eq:optimal-timestep-4thorder})}
        \If{$\Delta t \leq \Delta t^*$}  \Comment{Reject approximation until "good enough"}     
        \State ${\bf u}^{n+1} \leftarrow {\bf u}_i^{n+1}$, $n \leftarrow n + 1$, $t^{n+1} \leftarrow t^n + \Delta t$
        \EndIf
    \Until{$t^{n+1} > T$} \Comment{termination criterion}
\end{algorithmic}
\end{algorithm}

\FloatBarrier
\section{Numerical Results}
\label{numerics}

Trough this section we present a short analysis of the dynamical system in combination with the performance of the in previous section derived solvers.
For the implementation we used Julia\footnote{\url{https://julialang.org/}} 1.1.
A Jupyter notebook containing the implementation of this section can be found online under \url{https://git.noc.ruhr-uni-bochum.de/ogierdst/solving-hodgkin-huxley-type-systems/}.

We deal with the disease dynamics model (\ref{modell}) and the parametrization taken from \cite{huber2004}:
\begin{eqnarray*}
\tau_x = 10, \; w_{hi} = 20, w_{he} = 15, \; w_{li} = 18, w_{le} = 3 , \\
 x_{le} = x_{li} = -30 , x_{he} = x_{hi} = 110 , \\
\tau_{hi} = 2, \; \tau_{he} = 0, \; \tau_{li} = 50, \; \tau_{le} = 10, \; \\
\Delta_{he} = \Delta_{hi} =  \Delta_{li} = \Delta_{le} = 0.25, \\
\tilde{x}_{le} = \tilde{x}_{li} = 20, \; \tilde{x}_{he} = \tilde{x}_{hi} = 35,
\end{eqnarray*}

Note that since $\tau_{he} = 0$ we obtain a reduced system of order 4, where $a_{he} = F_{he}(x)$.
This choice corresponds to an instantaneous activation of $a_{he}$, effectively reducing the system's dimension to 4.

\subsection{Exploring Structural Properties via Computational Bifurcation Analysis}
We start by exploring the system's overall behavior for varying $S \in [0,400]$.
This section is not ment to replace a rigorous dynamical system analysis, but to outline its coarse structure to ease the analysis of the solvers.
For convenience we use {\it Tsit5} from the JuliaDiffEq package \cite{juliadiffeq} as the solver when not otherwise stated.
This way we provide a tested baseline as a foundation to compare the implementation of our solvers to.

As a first step we extract the system's fixed-points, which are given by setting the change in all dimensions to zero.
Formally we first rewrite the model (\ref{modell})
$$
\frac{d {\bf u}}{d t} = {\bf f}({\bf u},S) ,
$$
and set it to zero, i.e.
$$
{\bf f}({\bf u^*},S) = {\bf 0}.
$$
Here ${\bf u^*}$ denotes a fixed point. 
The system's special structure allows us to reduce this problem to one dimension, as 
$$
\forall i \in \{ he, hi, le, li\}: 0 = F_i(x^*) - a_i \Longleftrightarrow a_i = F_i(x^*),
$$
which results in
\begin{align}
\label{eq:fixed-point-x}
 0 = -x^* - \left(\sum_{i \in \{he, le, li \}} F_i(x^*) \; w_{i} \; (x^* - x_{i}) \right) - F_{hi}(x^*)^2 \; w_{hi} \; (x^* - x_{hi}) + S.
\end{align}

It can be easily shown that this function is unbounded and strictly monotonically decreasing for our chosen parametrization.
This implies that there is a single fixed point for each S.
We obtain the corresponding $a_i^*$'s explicitly by plugging the solution back into the corresponding equations.
Approximating some fixed points with Newton-Raphson and linearising around these gives an idea of its stability properties. 
This yields the Jacobian $J_{ij} = \frac{\partial f_i}{\partial u_j} |_{{\bf u} = {\bf u}^*}$, which is explicitly:
$$
\begin{bmatrix}
-\frac{1 + \left( \Delta_{he} a^*_{he} (1-a^*_{he}) w_{he} (x^* - x_{he})   a^*_{he} w_{he} + {a^*_{hi}}^2 w_{hi} + a^*_{le} w_{le} + a^*_{li} w_{li} \right)}{\tau_x}
& -\frac{2 a^*_{hi} w_{hi} (x^* - x_{hi})}{\tau_x}
& -\frac{w_{le} (x^* - x_{le})}{ \tau_{x}}
& -\frac{w_{li} (x^* - x_{li})}{\tau_{x}}\\
\frac{\Delta_{hi}  a^*_{hi} (1 - a^*_{hi})}{ \tau_{hi} } & -\frac{1}{\tau_{hi}} & 0 & 0 \\
\frac{\Delta_{le} a^*_{le} (1 - a^*_{le})}{ \tau_{le}}  & 0 & -\frac{1}{ \tau_{le} }& 0 \\
\frac{\Delta_{li} a^*_{li} (1 - a^*_{li})}{ \tau_{li}} & 0 & 0 & -\frac{1}{ \tau_{li} }
\end{bmatrix}
$$
Note that $f_i$ is the disease models i-th equation while $F_i$ denotes the sigmoidal function for the corresponding activation.

\begin{figure}
    \centering
    \includegraphics[width=0.3\textwidth]{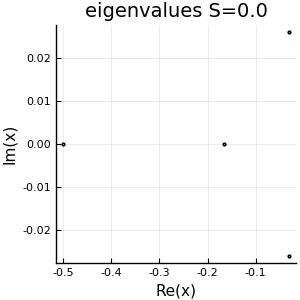}
    \includegraphics[width=0.3\textwidth]{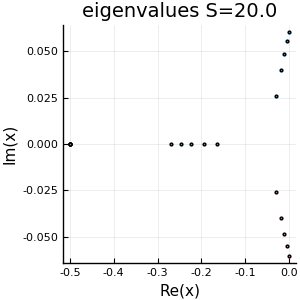}
    \includegraphics[width=0.3\textwidth]{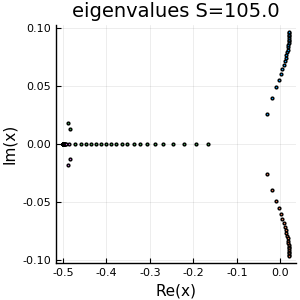}\\
    \includegraphics[width=0.3\textwidth]{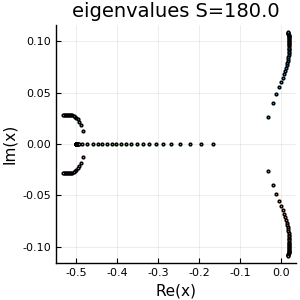}
    \includegraphics[width=0.3\textwidth]{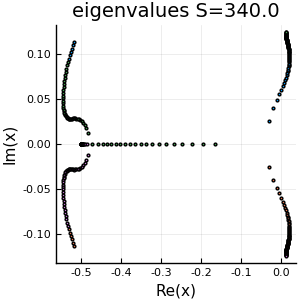}
    \includegraphics[width=0.3\textwidth]{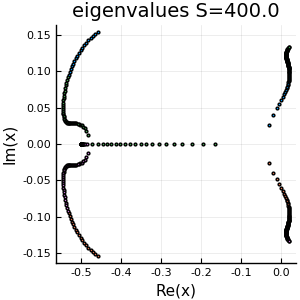}
    \caption{Evolution of the system's Jacobian's eigenvalues for some S. The increment between consecutive S is 5.}
    \label{fig:jacobian-eigenvalues}
\end{figure}

Further we approximate the Lyapunov spectrum as a measure for the divergence of nearby trajectories to obtain information about the system's stability properties.
The Lyapunov spectrum is formally defined as
$$
\lambda_i = \limsup\limits_{t\rightarrow\infty} \frac{\ln{\alpha_i}}{2t}
$$
where $\alpha_i$ are the eigenvalues of $M(t)M^T(t)$. 
Here $M$ denotes the discrete time evolution operator.
We carry out the numerical approximation of the lyapunov spectrum with ChaosTools \cite{Datseris2018}. 
The results are presented in figure \ref{fig:lyapunov-spectrum}.
Lyapunov exponents can be seen as a simple characterization for the stability of manifolds, where a negative exponents indicate attraction, positive exponents indication repulsion and an exponent of zero indicates conservation.

\begin{figure}[h]
    \centering
    \includegraphics[width=0.75\textwidth]{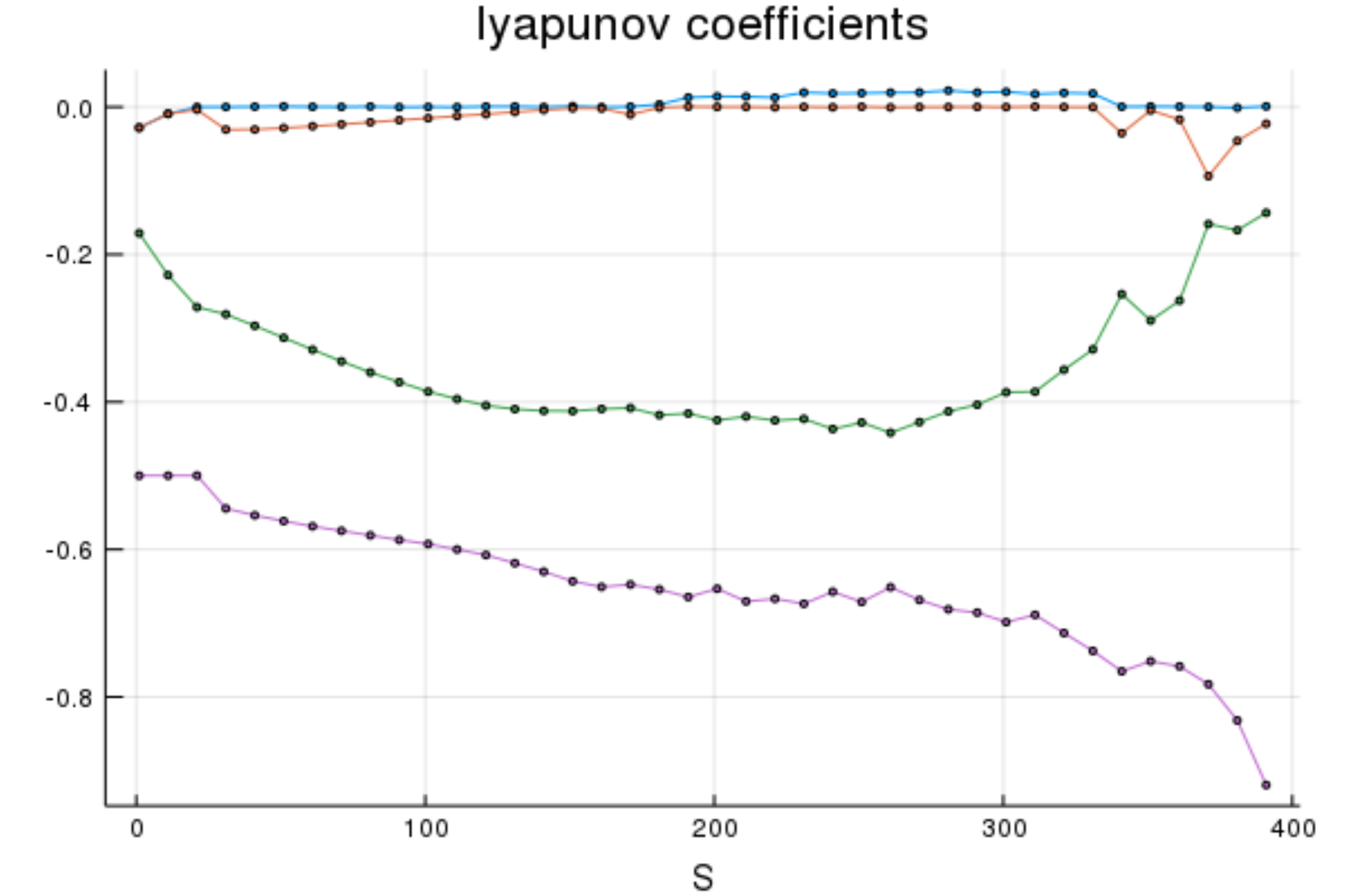}
    \caption{The Lyapunov spectrum of the disease dynamics model for different choices of S.}
    \label{fig:lyapunov-spectrum}
\end{figure}

These figures together suggest a Andronov-Hopf bifurcation around $S\approx20$, where in the interval $[0,~20)$ the fixed point is a stable one.
After this we see a maximal Lyapunov coefficient of value zero paired all other coefficients negative, which is associated with stable cycling.
Around $S\approx180$ we see that the largest Lyapunov coefficient gets positive.
This is possibly associated with the onset of chaos.
Further around $S\approx340$ the systems gains stability again, which is in turn possibly associated with the end of chaotic behavior, returning to stable cycling again.
Around $S\approx100$ we see the two real eigenvalues becoming complex.
We failed to associate this observation with any phenomenon.

Now that we have worked out the coarse system structure we move on to confirm details computationally.
We start by approximating solutions for arbitrary S from each identified interval, namely $[5, 100, 180, 255, 340, 400]$, with algorithm \ref{algo:icn} with tolerance $\varepsilon=10^{-7}$, time step $\Delta t=0.01$ and the maximum number of iterations $I=10$.
The results are visualized in figure (\ref{fig:icn-exploration}).
It can be clearly seen that for $S=5$ the fixed point is attracting, while all other choices of S yield oscillations, which is on par with the previous computational analysis of the Jacobian and Lyapunov spectrum. The choice $S=255$ suggests either an unstable solver or chaotic cycling behavior. Please note also that solver takes up some time to settle, i.e. moving from the initial condition into an orbit.

\begin{figure}[h]
    \centering
    \includegraphics[width=0.3\textwidth]{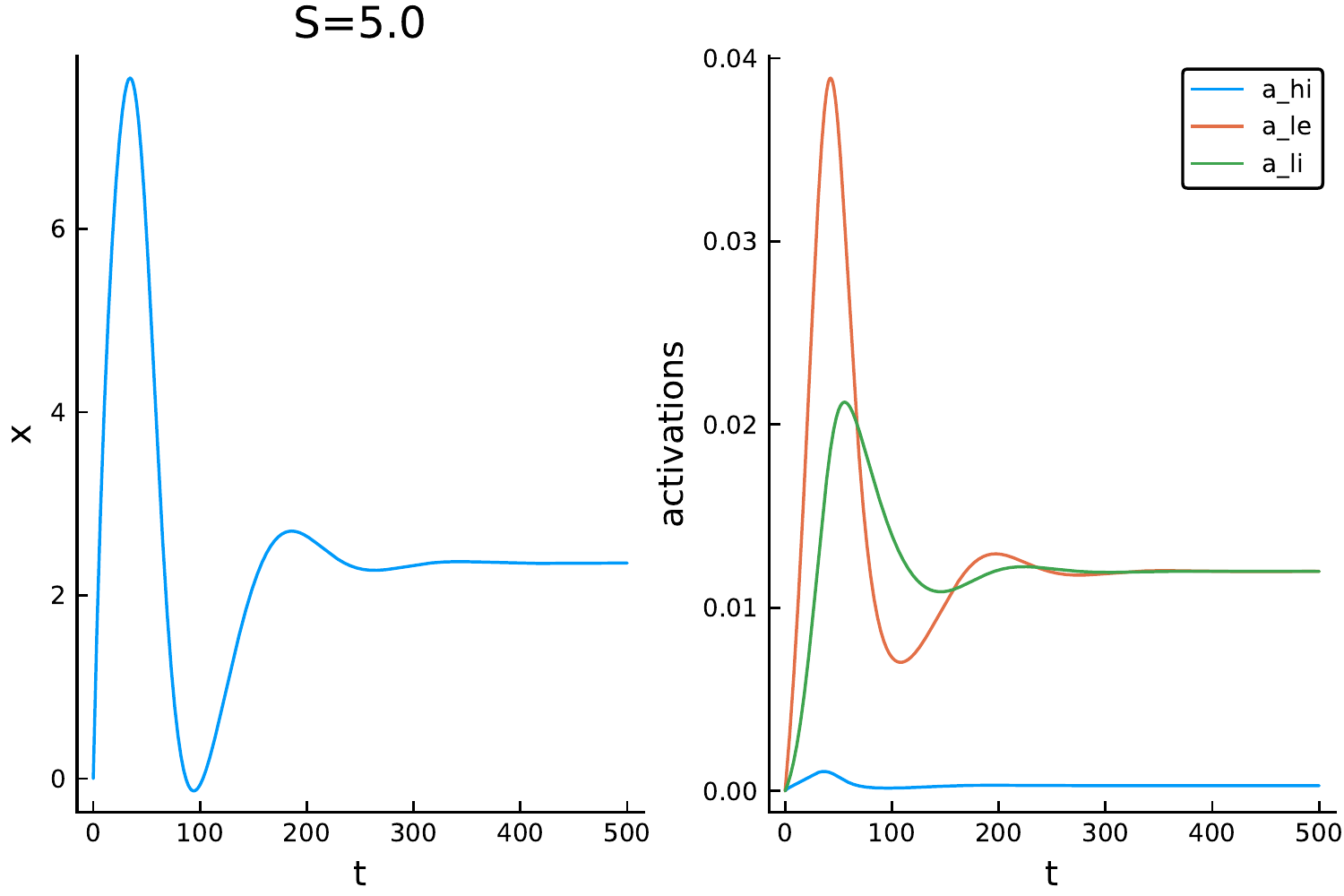}
    \includegraphics[width=0.3\textwidth]{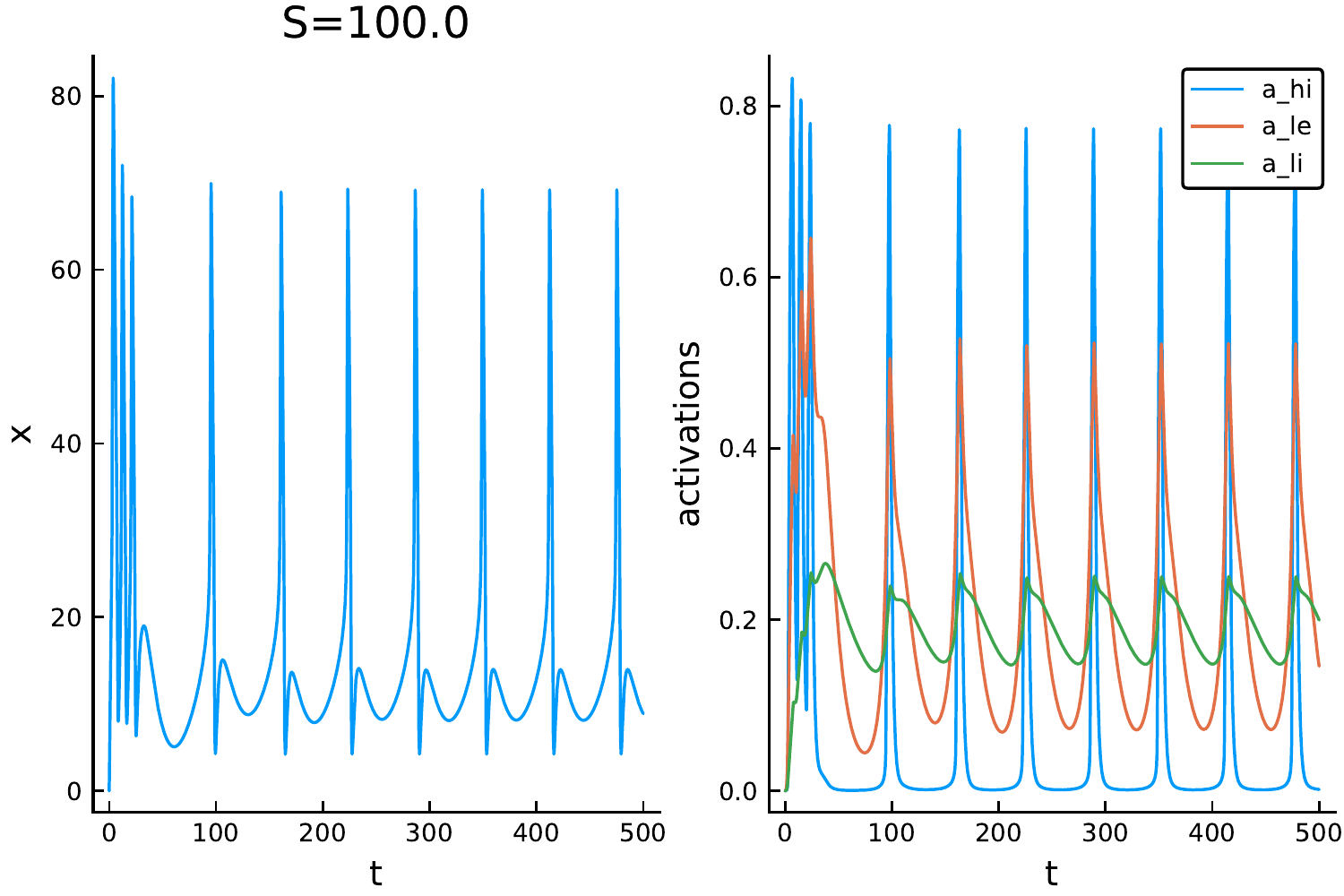}
    \includegraphics[width=0.3\textwidth]{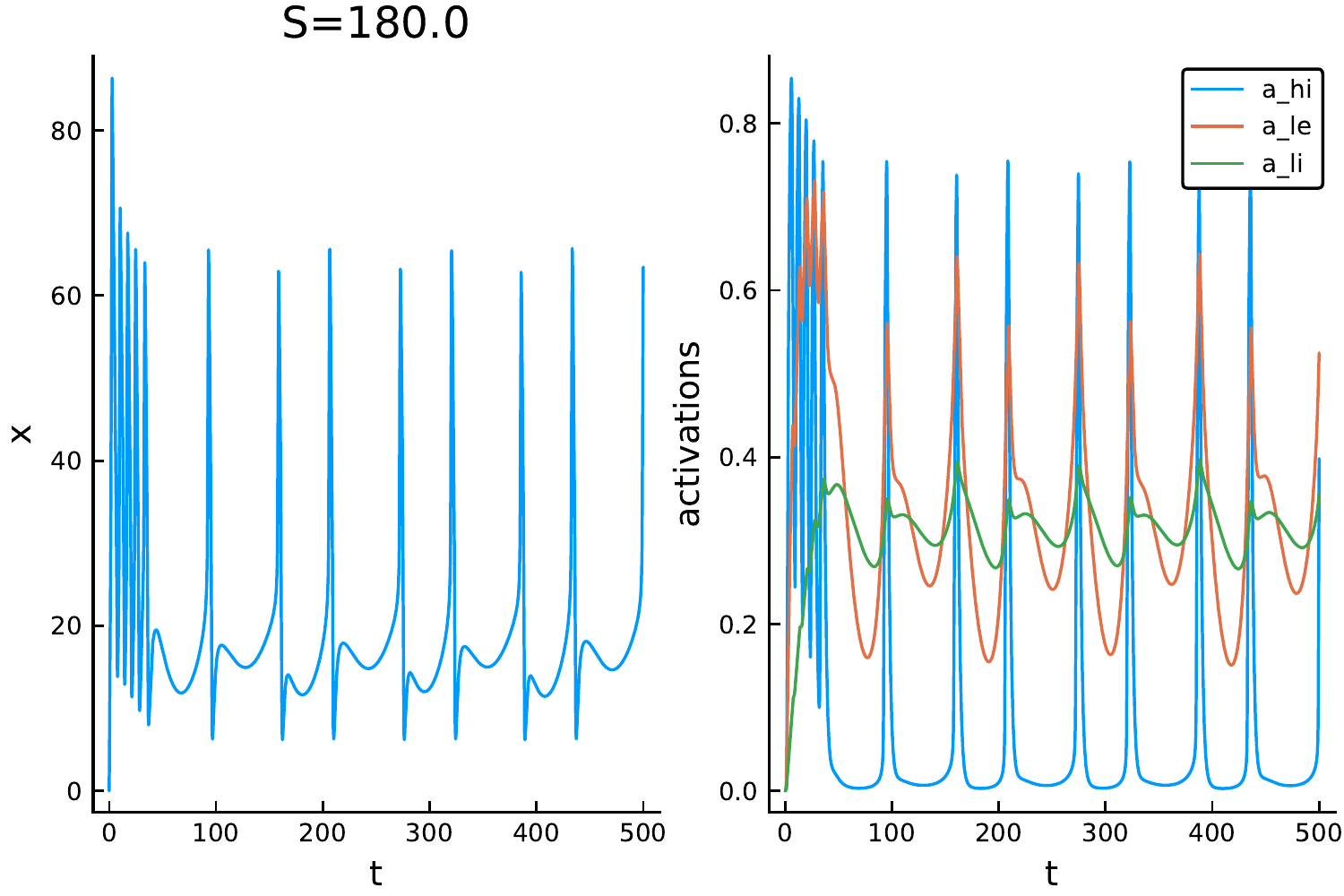}\\
    \includegraphics[width=0.3\textwidth]{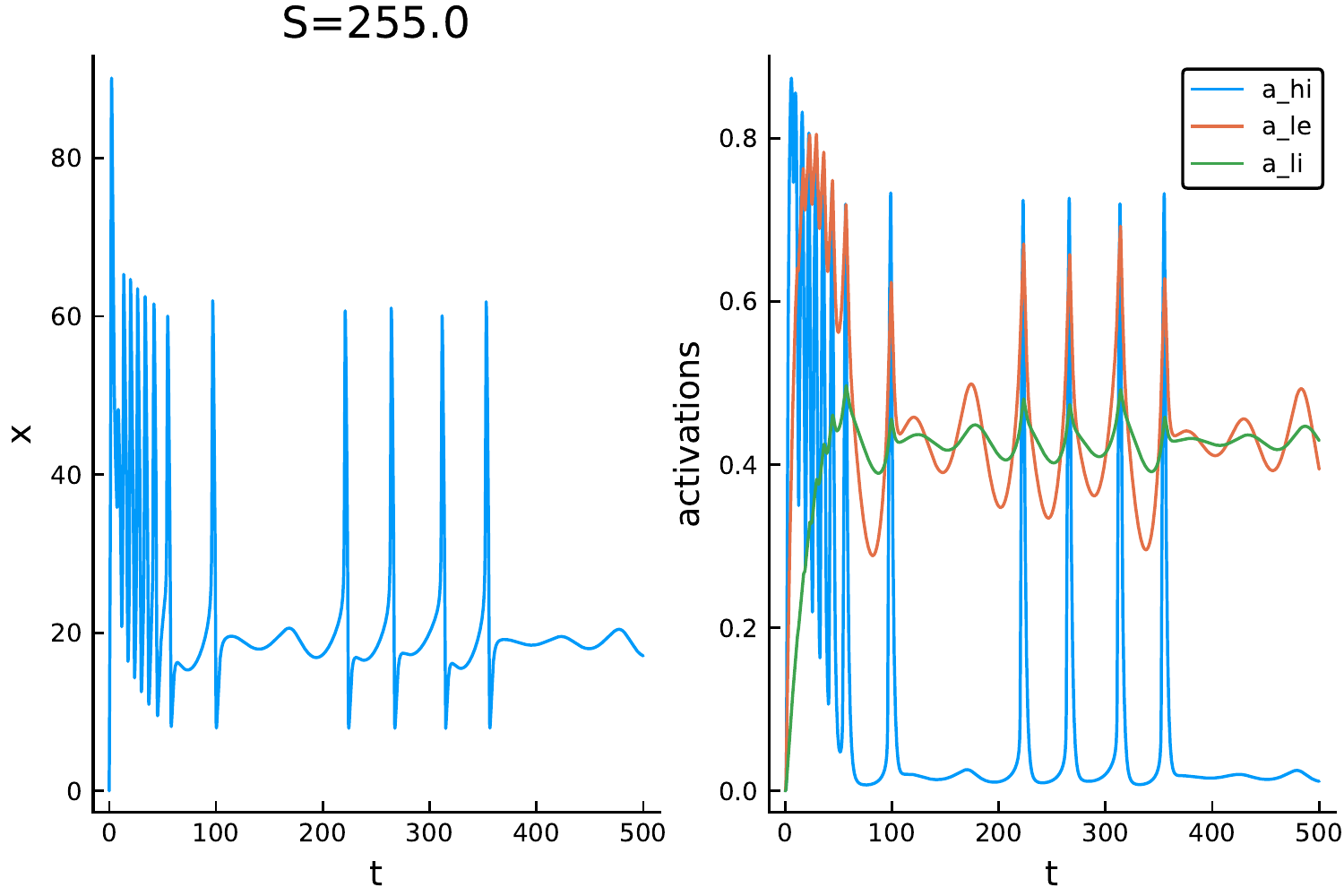}
    \includegraphics[width=0.3\textwidth]{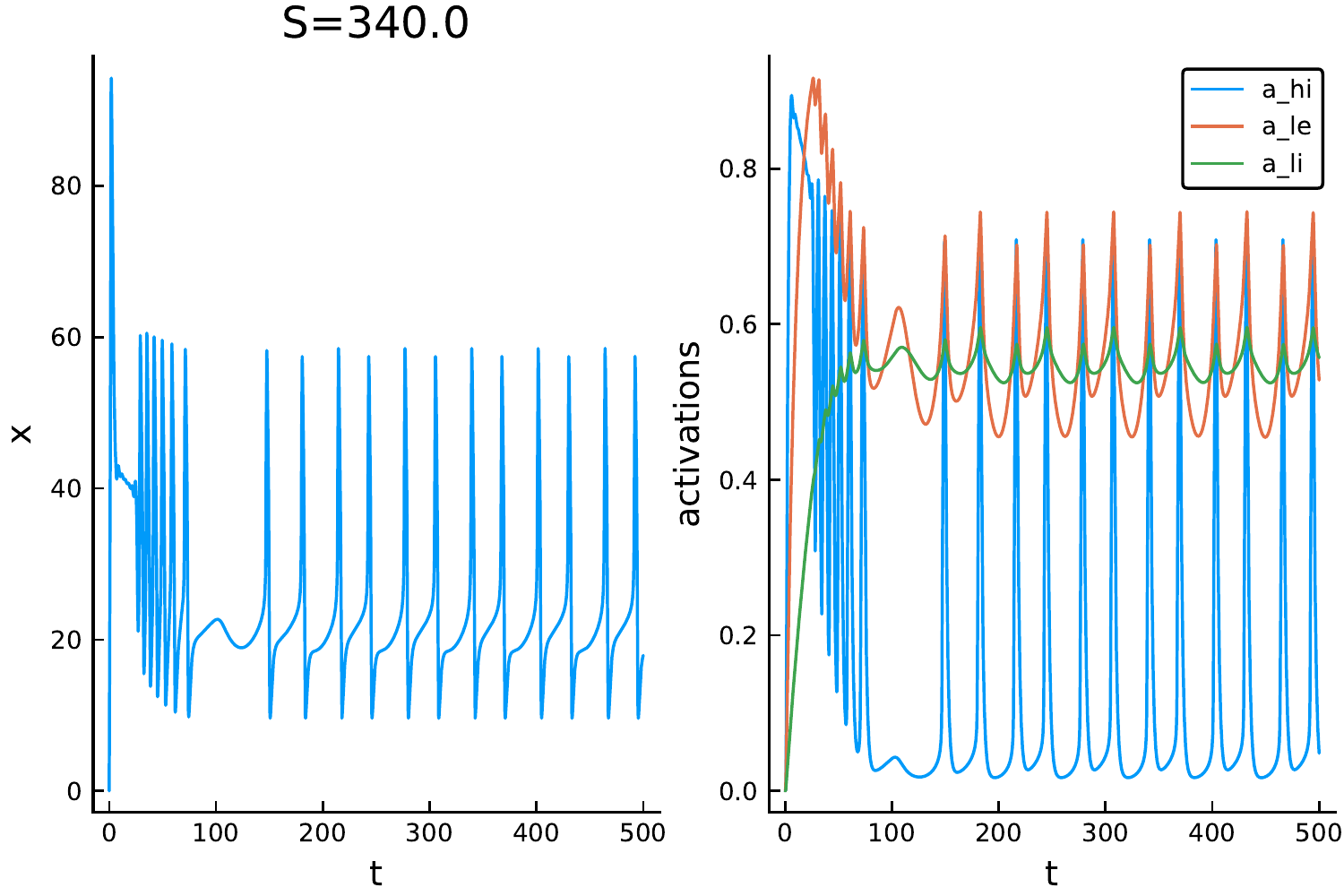}
    \includegraphics[width=0.3\textwidth]{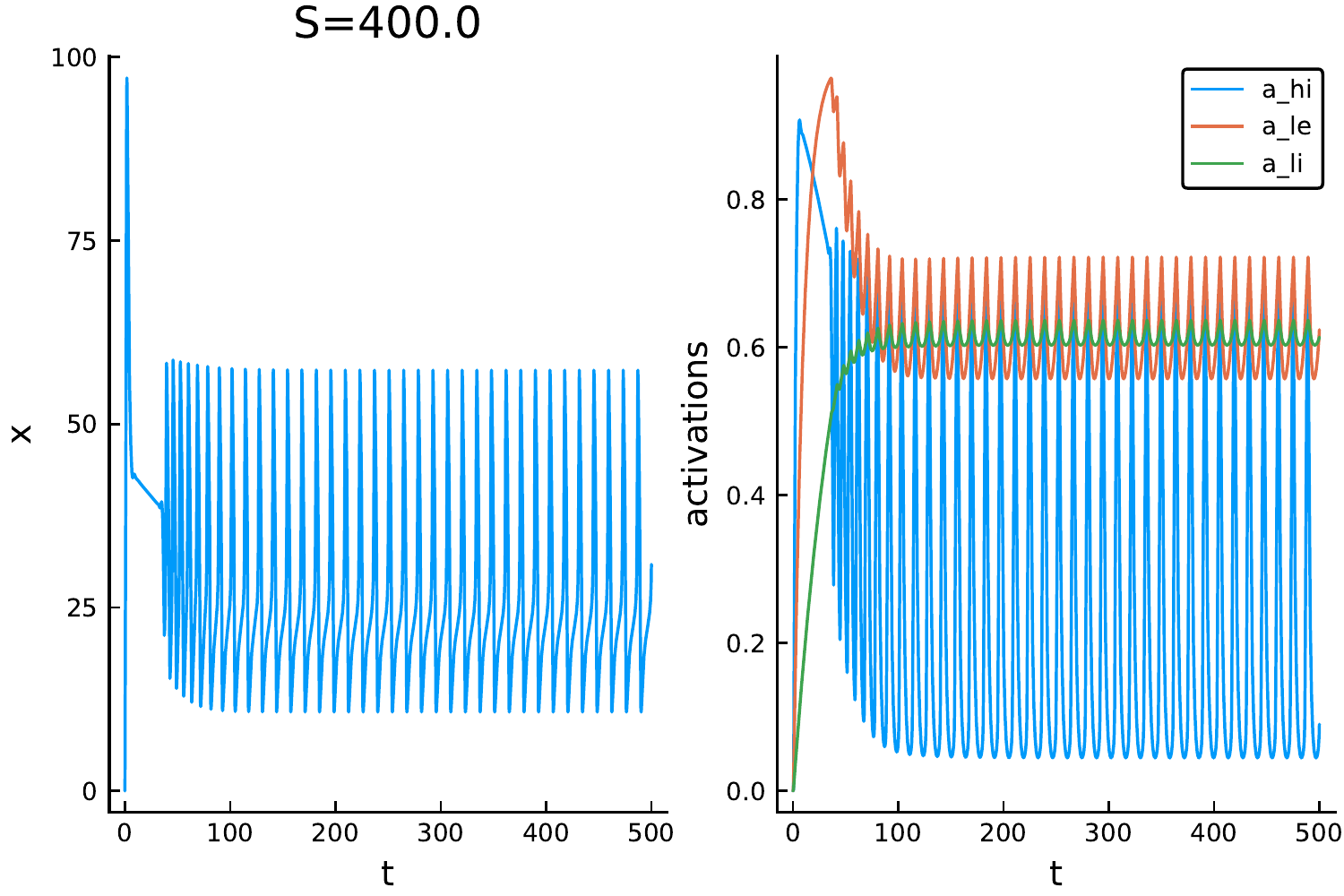}
    \caption{Approximations of the disease dynamics model with the ICN solver (algorithm \ref{algo:icn}) and various S.
    Six approximations for interval $[0,500]$ and initial condition the zero vector, i.e. $u(0) = (0, 0, 0, 0)$, can be seen in pairs of two images, where the left image is the observable $x$ and the left one contains the activation vector ${\bf a}$.
    We have chosen a tolerance $\varepsilon=10^{-7}$, a time step $\Delta t=0.01$ and a maximum number of iterations $I=10$.}
    \label{fig:icn-exploration}
\end{figure}

With this basic structural guesses we move forward towards a computational bifurcation analysis, as to the best of our knowledge no analytic work is available about the general structural properties of Hodgkin-Huxley type systems and especially our disease dynamics model.
We will use two related techniques to quantify the systems behavior computationally, namely Poincaré maps and interspike intervall  (ISI) distributions.
For both techniques we will use the same section.
This will also give us some clues about very basic stability and correctness properties of the in the previous section constructed solvers.

Poincaré sections allow us to study the behaviour of continuous high-dimensional system with a geometric description
in a lower-dimensional space, see \cite{teschl2012}.
The basic idea is to reduce the system to a continuous mapping $T$ of the applied plane $S$ into itself, means we have:
\begin{equation*}
P_{K^+1} = T(P_k) = T[T(P_{k-1)}] = T^2(P_{k-1}) = \ldots
\end{equation*}
Therefore we reduce the continuous flow into a discrete-time mapping.
The Poincaré section of the hyperplane $<(1,0,0,0),u> = 40$ can be seen in figure (\ref{fig:poincare-3d}).

\begin{figure}[h]
    \centering
    \includegraphics[width=0.6\textwidth]{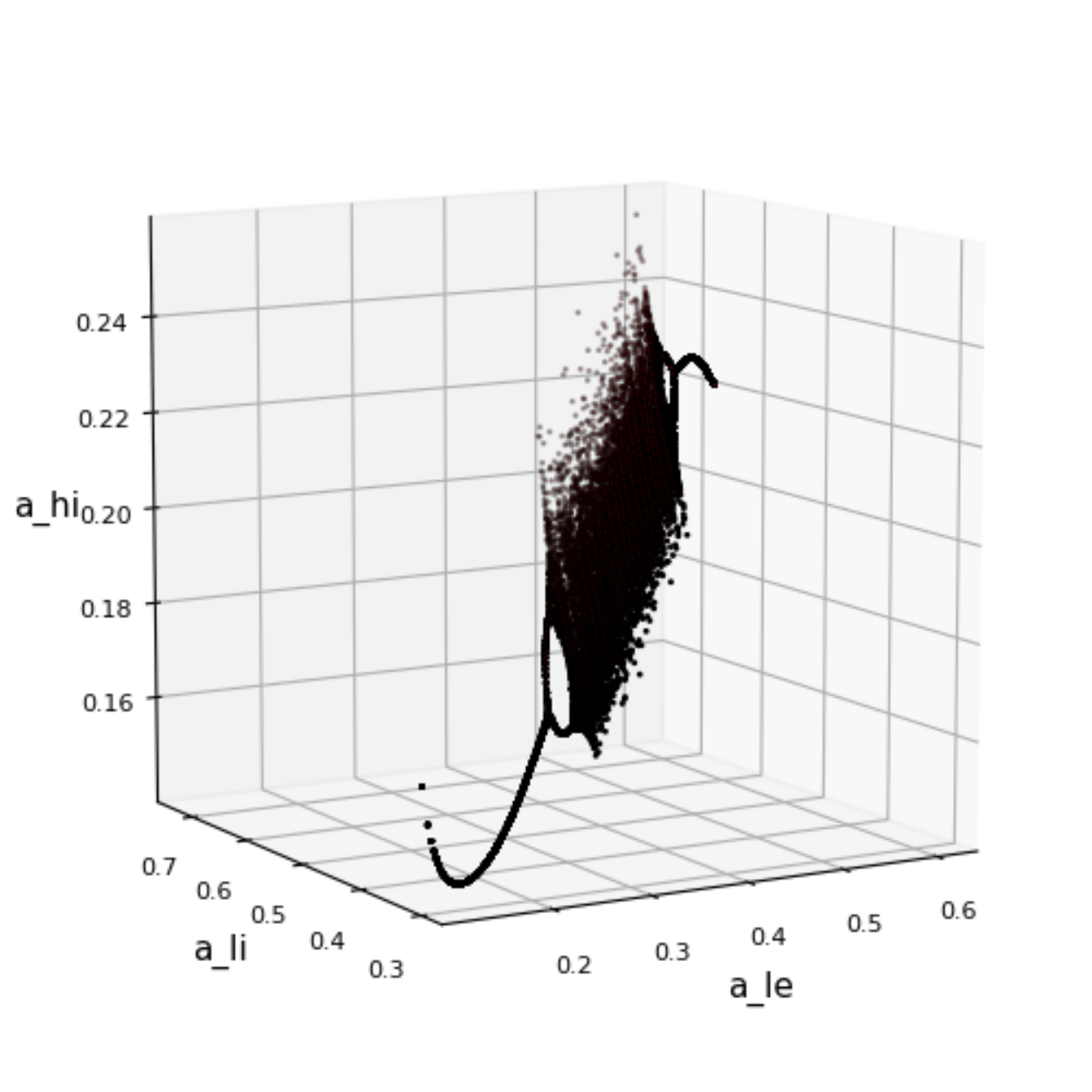}
    \caption{The Poincare section for the disease dynamics with the hyperplane $<(1,0,0,0),{\bf u}> = 40$ with increments of 1 on S over the previously mentioned region of interest $[0,400]$.}
    \label{fig:poincare-3d}
\end{figure}

A closer look into the regions with the first branch and the last merge reveals can be found in figure (\ref{fig:roads-to-chaos}).
The found structures can be identified as classical period doubling and period halving, which are more pointers towards the existence of chaotic behavior, as they usually indicate the onset and the end of chaotic regimes.
Computing the position of three branching points trough a finer step size for S in the Poincaré section, starting with the second branching point (i.e. $\approx (175.1, 178.8, 179.6)$, yields a ratio close to Feigenbaum's constant, suggesting period doubling.
The same structure can be found on the other side at the end of the hypothetically chaotic regime, suggesting period halving (i.e. $\approx (342.25, 340.0, 339.5)$).
While a rigorous analysis is out of the scope of this paper, we take the worked out arguments to support the assumption, that chaotic cycling is actually present as a property of the dynamical system and not as a numerical artifact of instabilities in the used solvers.

\begin{figure}[h]
    \centering
    \includegraphics[width=0.45\textwidth]{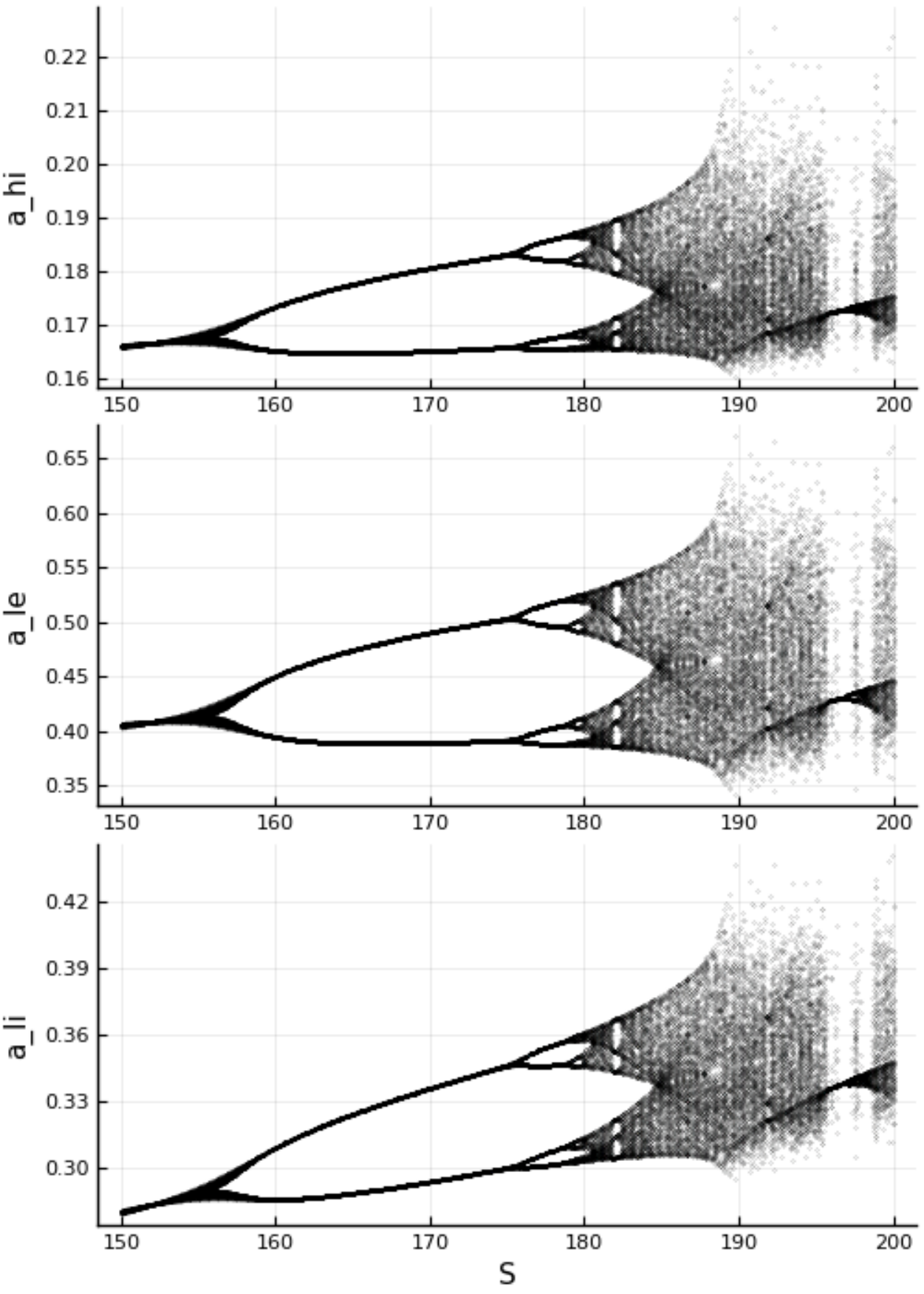}
    \includegraphics[width=0.45\textwidth]{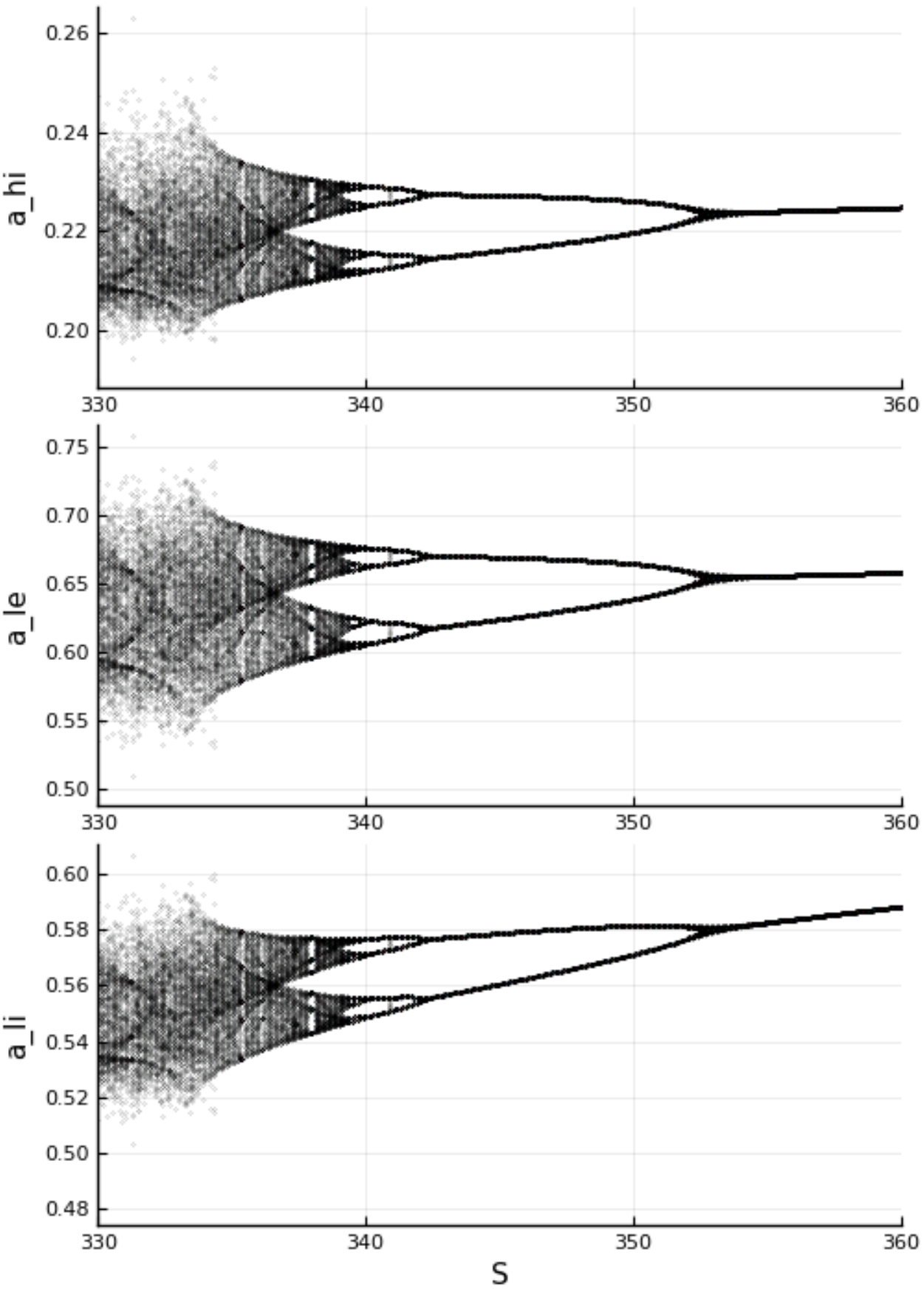}
    \caption{The interspike interval for the disease dynamics with the hyperplane $<(1,0,0,0),u> = 40$ with increments of 1 on S over the previously mentioned region of interest $[0,400]$.}
    \label{fig:roads-to-chaos}
\end{figure}

As a next step we generate the interspike interval distributions for the same section, which is basically the distribution of time between two consecutive intersections of this plane of the solution, which starts in the corresponding attractor.
This distribution is approximated by fixing S and solving the system for a fixed time interval (here [0,10000]).
The bifurcation plot for each in this paper defined scheme can be found in figure (\ref{fig:isi-distributions}).

\begin{figure}[h!]
    \centering
    \fbox{\includegraphics[width=0.3\textwidth]{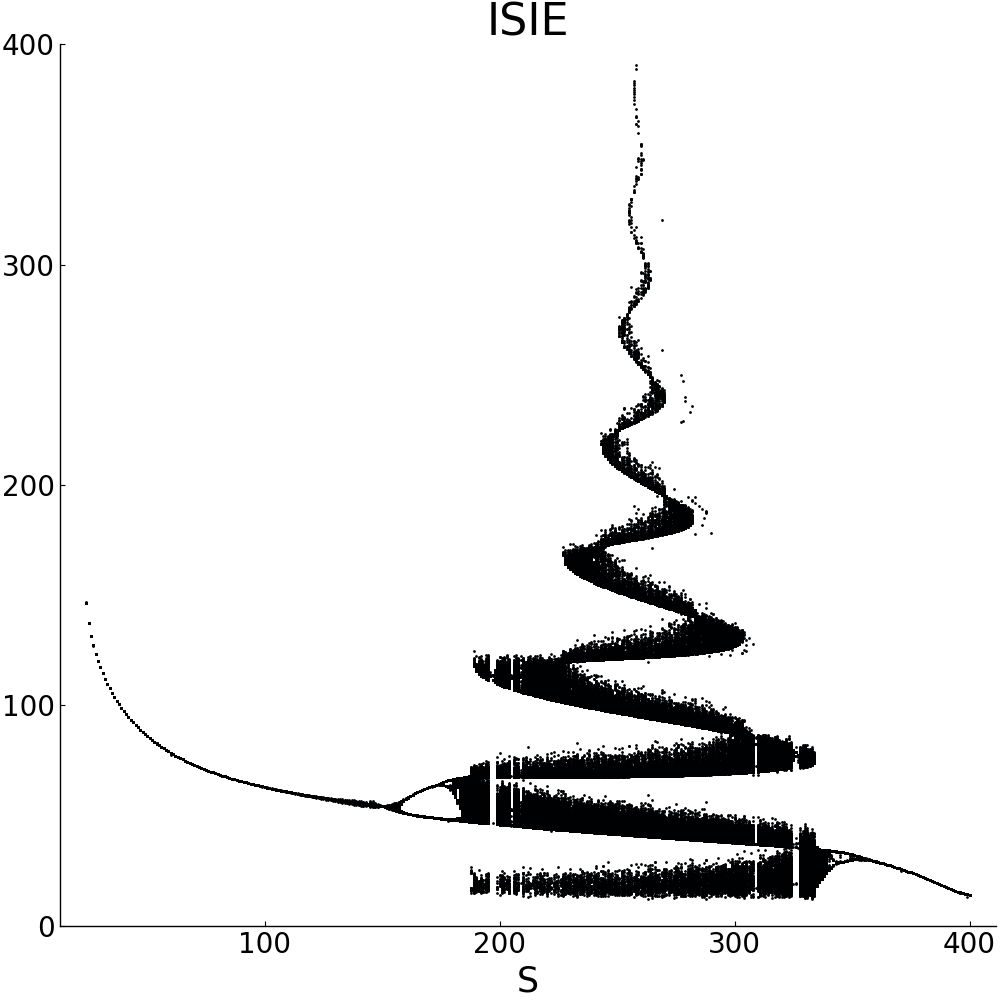}}
    \fbox{\includegraphics[width=0.3\textwidth]{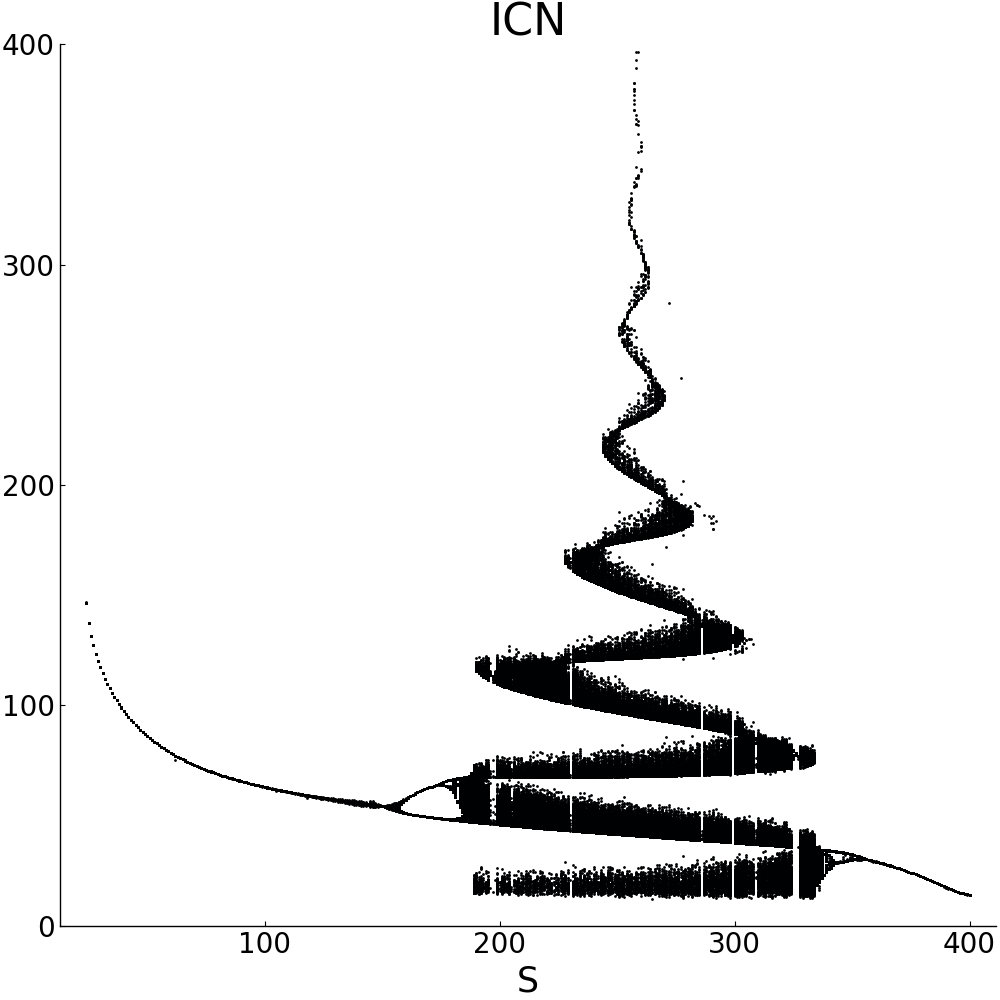}}
    \fbox{\includegraphics[width=0.3\textwidth]{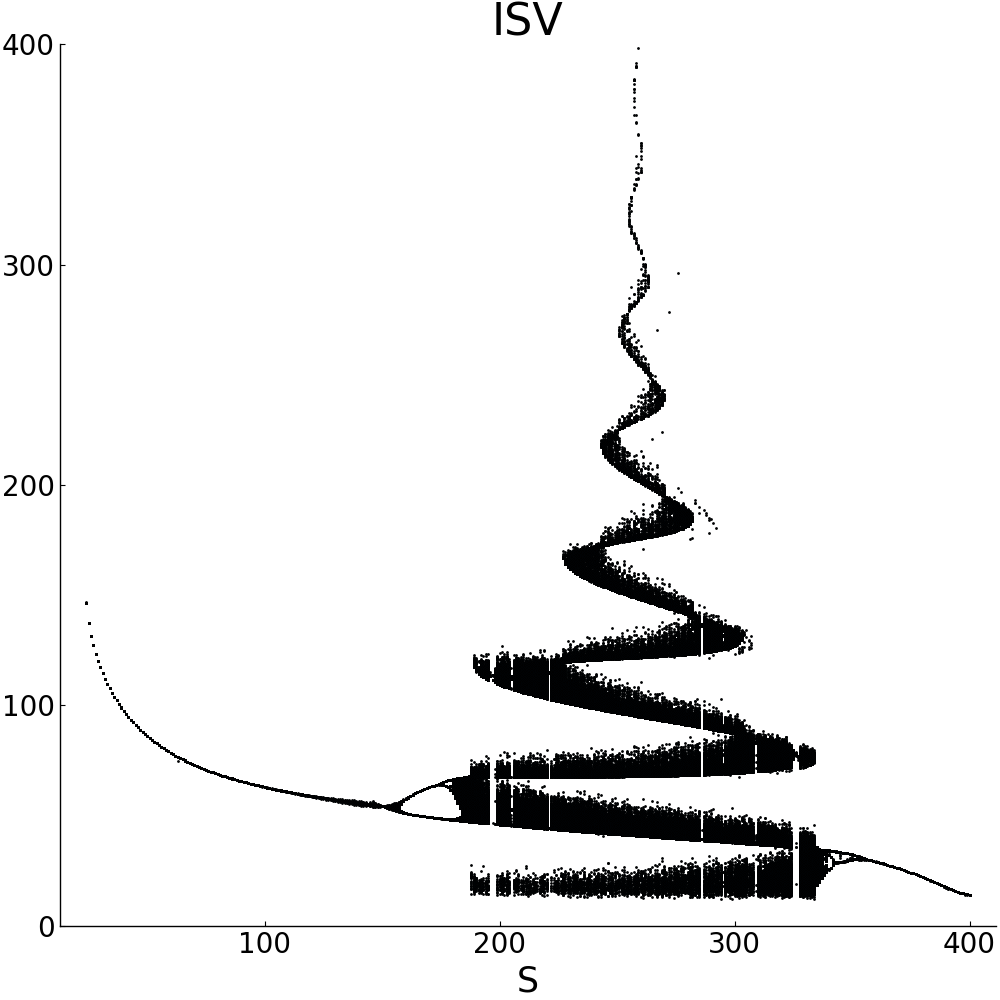}}\\
    \fbox{\includegraphics[width=0.3\textwidth]{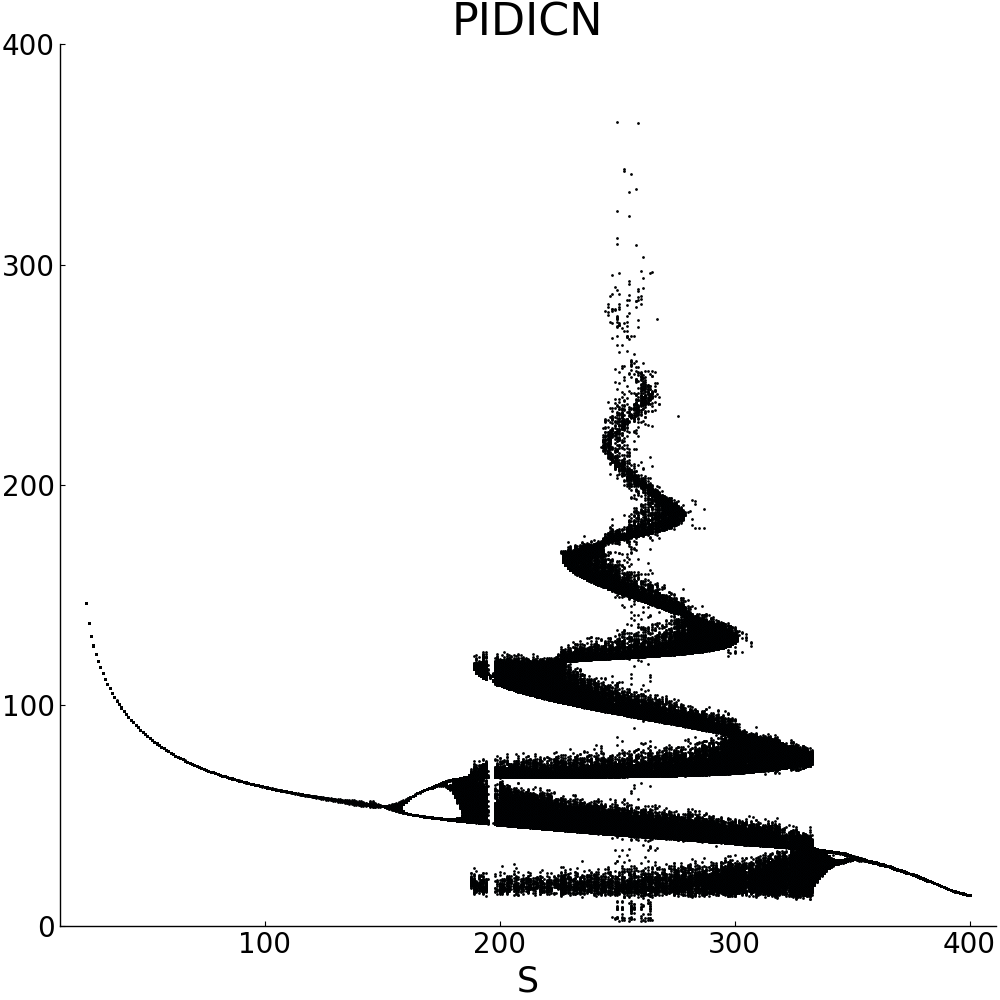}}
    \fbox{\includegraphics[width=0.3\textwidth]{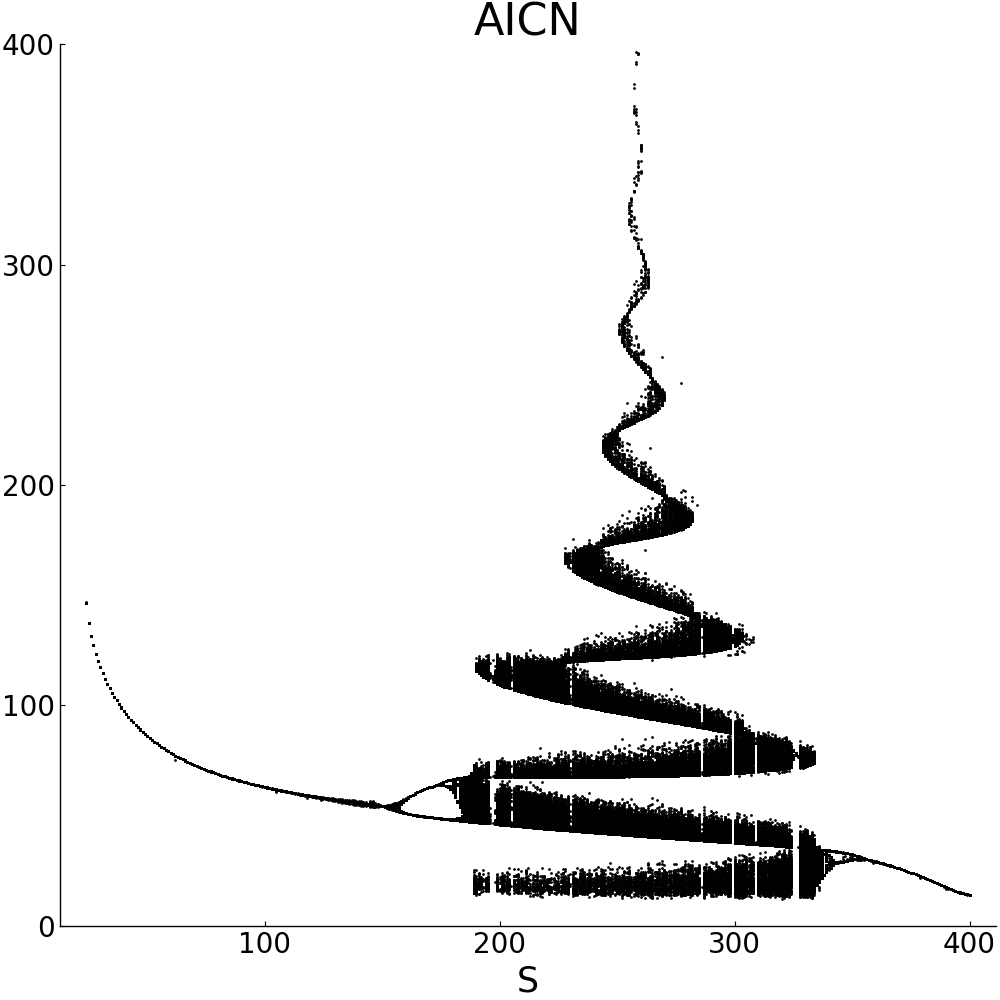}}
    \fbox{\includegraphics[width=0.3\textwidth]{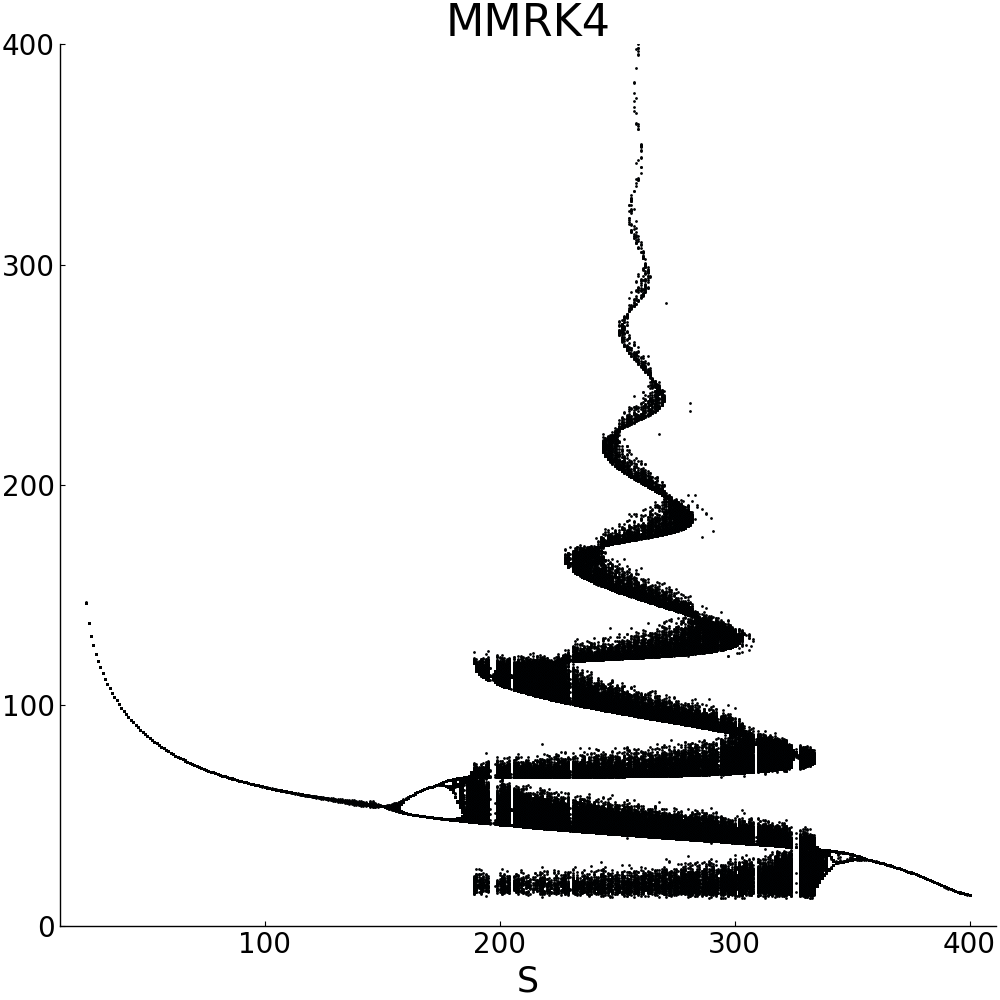}}\\
    \fbox{\includegraphics[width=0.3\textwidth]{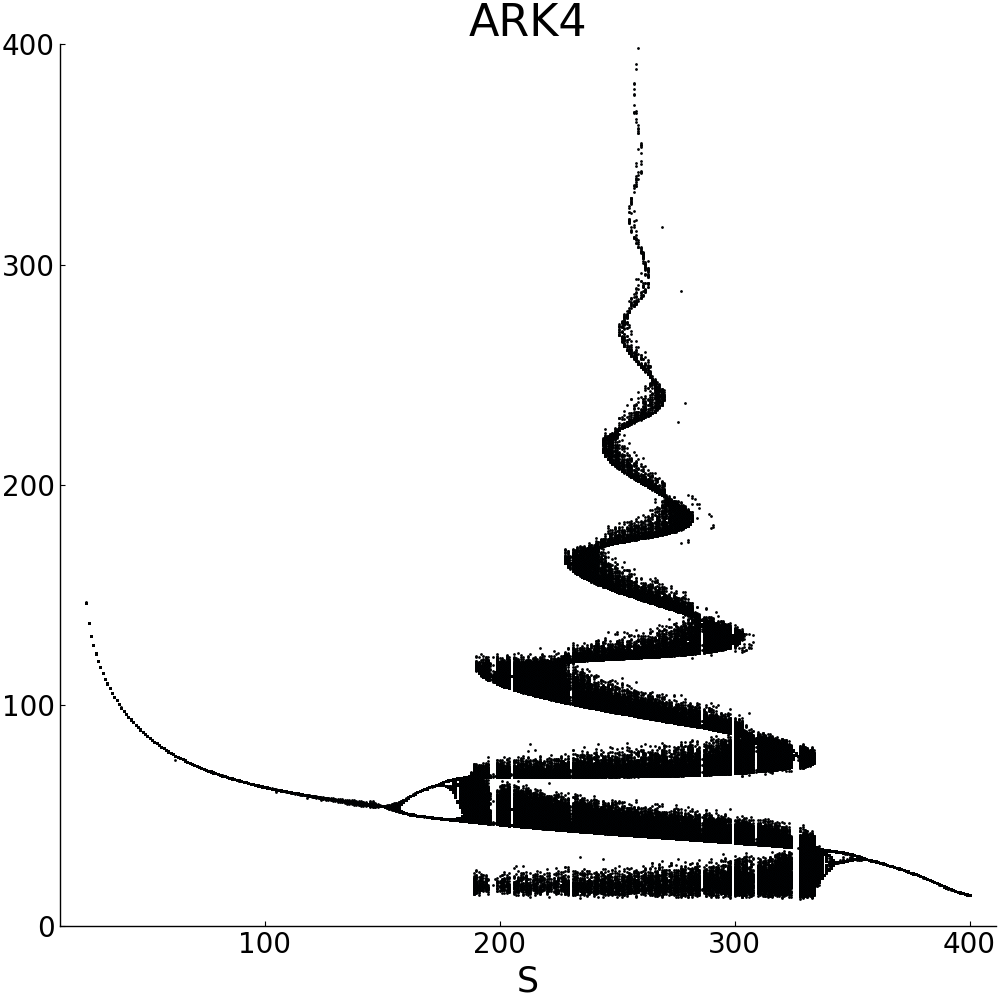}}
    \fbox{\includegraphics[width=0.3\textwidth]{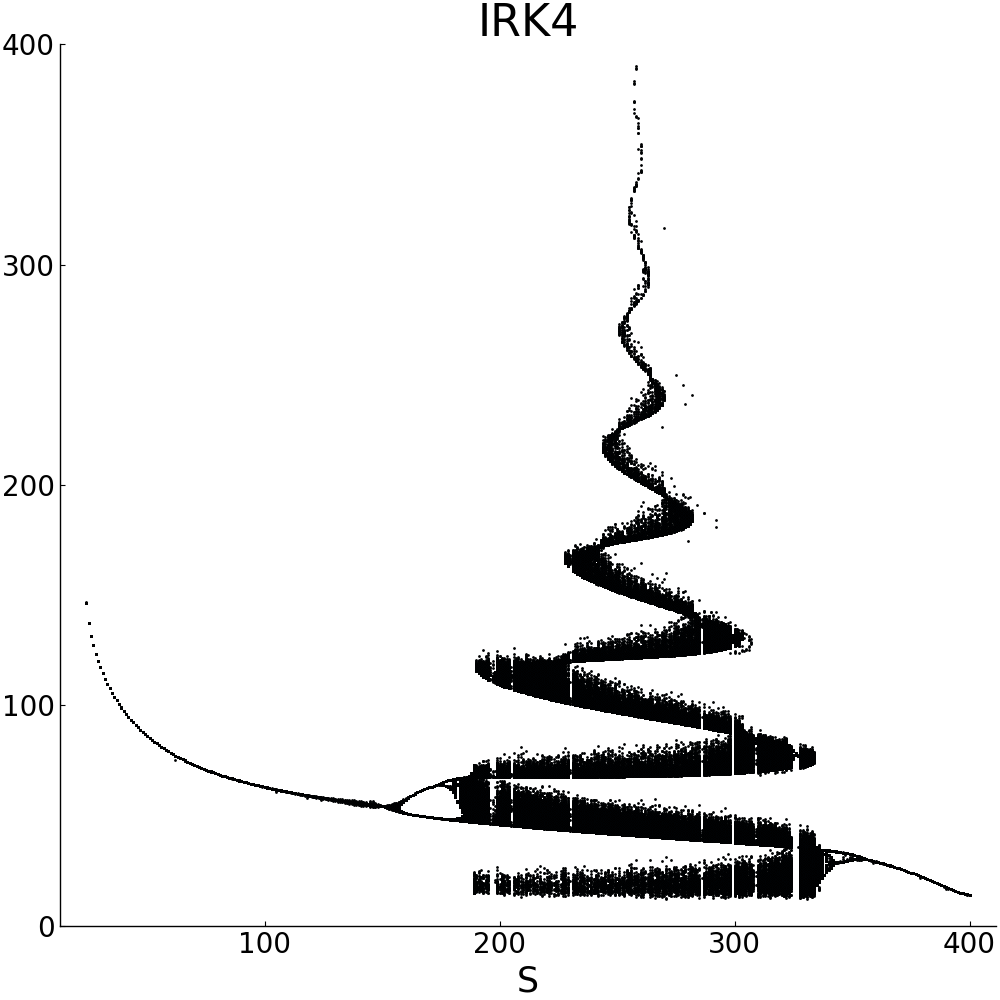}}
    \fbox{\includegraphics[width=0.3\textwidth]{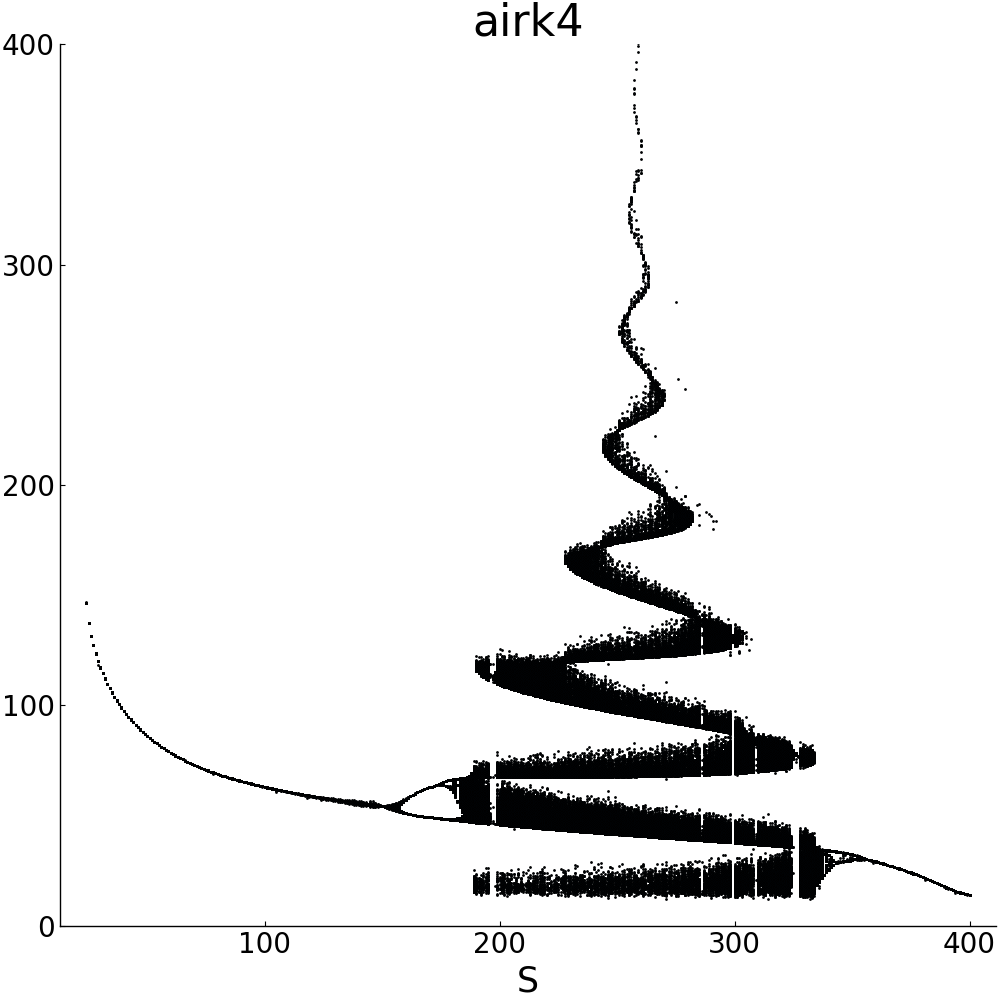}}
    \caption{Interspike interval distributions for each scheme. The x-axis represents our region of interest S from 0 to 400. 
    The upper row shows from left to right the solutions of algorithm \ref{algo:isie}, \ref{algo:icn} and \ref{algo:isv}, while the bottom row shows from left to right \ref{algo:mmrk4} and \ref{algo:irk4}. 
    The first part of the trajectory in $[0,500]$ is ignored analysis to give the solvers a chance to settle properly, allowing to uncover the actual structure of the oscillatory pattern within the attracting region.
    We can see basic agreement on the diagram for all solvers excepting the PIDICN, which seems to smear out the structure in the highly chaotic regime.}
    \label{fig:isi-distributions}
\end{figure}

\subsection{Convergence Study}
Now we test the convergence behaviour of the schemes with fixed time step.
We arbitrarily take one configuration of S for the stable as well as the chaotic cycling, namely $S\in\{100, 253\}$.
The convergence analysis is conducted as follows.
We start the first approximation with initial condition $u(0) = (0,0,0,0)$ and a time step $\Delta t=0.5$. 
With each consecutive approximation we reset the inditial condition and halve the time step while fixing all other parameters. 
The fixed parameters are $\varepsilon=10^{-7}, I=5$.
Consecutive approximations are now compared at the overlapping time points by integrating over the difference of these consecutive approximations.
The results can be found in figure (\ref{fig:convergence-study}).
\begin{figure}[h]
    \centering
    \includegraphics[width=0.48\textwidth]{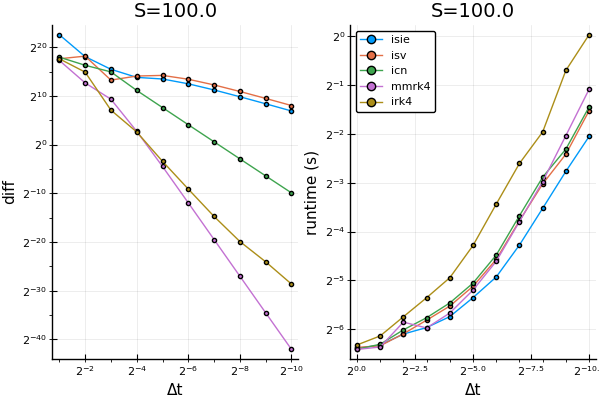}
    \includegraphics[width=0.48\textwidth]{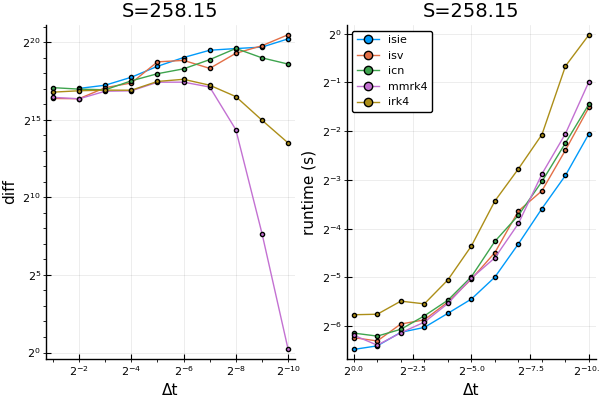}
    \caption{Convergence study for the fixed time step schemes. The parameters have been fixed to $\varepsilon=10^{-7}, I=5$. 
    The left pair of plots is in the regime of stable cycling while the right pair is in the chaotic regime.
    Each pair shows the integral error between two consecutive time step halvings and the corresponding runtime for each scheme.
    Note that the plots are on a log-log scale as we want to highlight the correlation on halving the time step consecutively.
    This way of plotting directly reveals the order of convergence, which correlates in the stable cycling case with the curve's slope.
    The computations were carried out on an Intel Core i5-7200U.}
    \label{fig:convergence-study}
\end{figure}

If the error shrinks with each halving the scheme converges to a solution, which should in the case of stable cycling be the corresponding solution of our system.
In the case of chaotic cycling the solution converges only for this specifically given time interval, as in the presence of chaos nearby trajectories diverge with exponential speed.
This divergence cannot be handled in general by our solvers for long time scales. 
On the one hand we usually cannot hit a solution exactly with only machine precision available, which may already be another solution trajectory which diverges exponentially.
On the other hand we can, again by machine precision limited, not reduce the time step for an arbitrarily large time interval, as computations get unstable for too small time steps.

\subsection{Analysis of the Adaptive Schemes}
Finally we want to evaluate the performance of the adaptive schemes, namely algorithms \ref{algo:pidicn}, \ref{algo:aicn}, \ref{algo:ark4} and \ref{algo:airk4}, side by side with the fixed time step schemes.
For this we start by evaluating the following local norms:
\begin{itemize}
\item $L_2$-norm of the solutions for each time-points:
\begin{align*}
\norm{\bf u}_{L_2[t^n, t^{n+1}]} = \sqrt{\Delta t^n \left( x(t^n)^2 + a_{he}(t^n)^2 + a_{li}(t^n)^2 +  a_{le}(t^n)^2 \right)}
\end{align*}
\item $L_2$-norm of the derivations for each time-points:
\begin{align*}
 \norm{\frac{d {\bf u}}{d t}}_{L_2[t^n, t^{n+1}]} 
 =  \sqrt{\Delta t^n \left(\left(\frac{d x(t^n)}{d t}\right)^2 + \sum_{i \in \{hi,le,li\}} \left(\frac{d a_{i}(t^n)}{dt}\right)^2 \right)}
\end{align*}
\end{itemize}
which result in the following global norms:
\begin{itemize}
\item $L_2$-norm of the solutions in the time domain:
\begin{align*}
 \norm{{\bf u}}_{L_2[0, T]}
= \sqrt{ \sum_{n=1}^N \Delta t^n \left( x(t^n)^2 + a_{he}(t^n)^2 + a_{li}(t^n)^2 +  a_{le}(t^n)^2 \right) }
\end{align*}
\item $L_2$-norm of the derivations for each time-points:
\begin{align*}
\norm{\frac{d {\bf u}}{d t}}_{L_2[0, T]} = 
\sqrt{\sum_{n=1}^N \Delta t^n \left( \left(\frac{d x(t^n)}{d t}\right)^2 + \sum_{i \in \{hi,le,li\}} \left(\frac{d a_{i}(t^n)}{dt}\right)^2 \right)}
\end{align*}
\end{itemize}
where the derivations are approximated as $\frac{d x(t^n)}{d t} \approx \frac{x(t^{n+1})- x(t^{n})}{\Delta t}$.
Again we chose two S arbitrarily (here \{100, 258.15\}) from the stable and chaotic cycling regions.
The solvers are configured as follows:
$$
I=10, J=10, \varepsilon_{fp} = 10^{-7}, \varepsilon_{t} = 10^{-7},
\Delta t = \Delta t_0 = 0.01
$$
For the PID-controller we hand-tuned the parameters to the following values: 
$$K_P = 0.025, \; K_I = 0.075, \; K_D = 0.01$$.
The time domain is $[0,10000]$.
The local norms can be found in figure (\ref{fig:local-norm-analysis}) while the global norms are listed in table (\ref{tab:global-norm-analysis}) side by side with benchmarked runtimes.
For the stable cycling we observe that all solvers nearly agree on the given time interval.
Only the ISIE (\ref{algo:isie}) and ISV (\ref{algo:isv}) schemes are a bit off.
No agreement is found in the chaotic case.

\begin{figure}[h]
    \centering
    \includegraphics[width=0.48\textwidth]{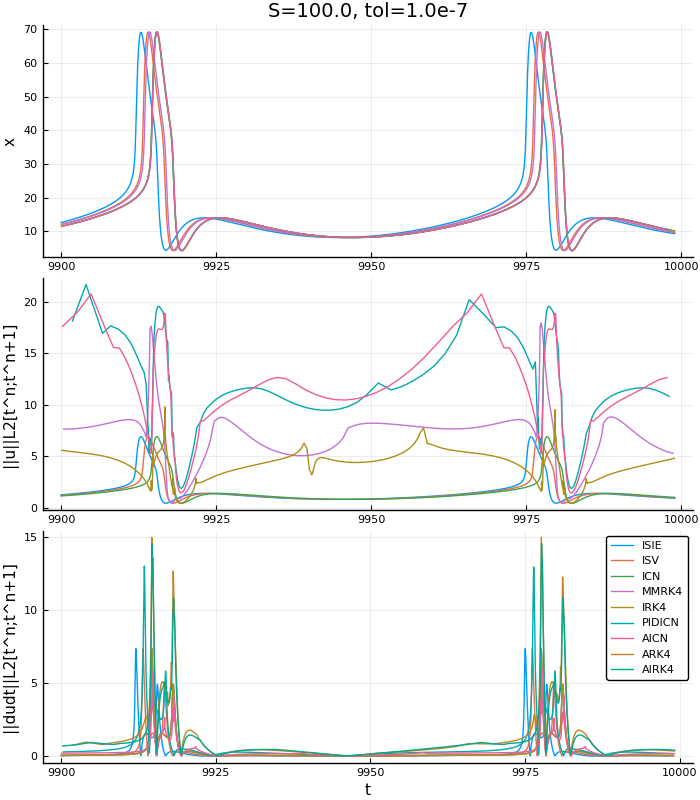}
    \includegraphics[width=0.48\textwidth]{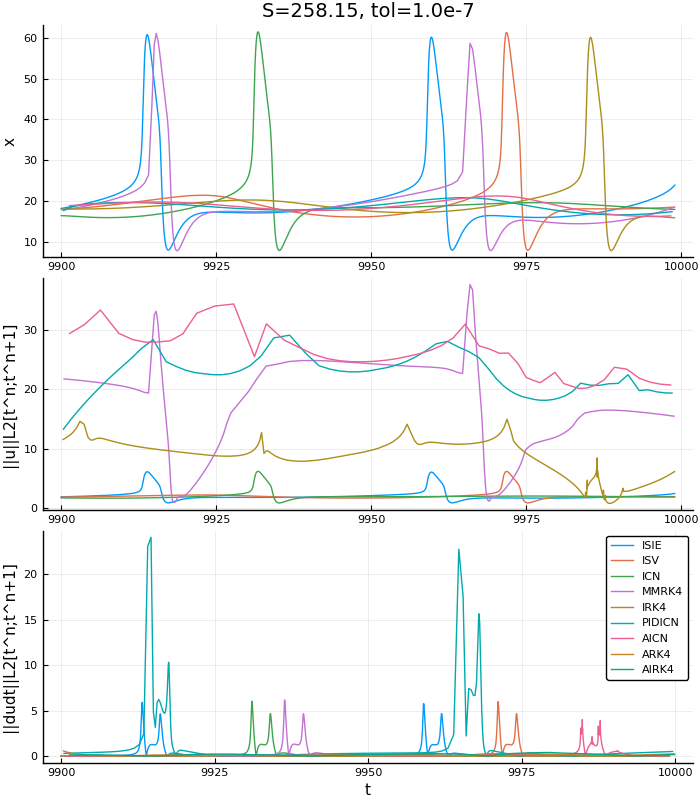}
    \caption{Last 100ms of the local error norms for all solvers.
    The left column shows the previously defined local norms for stable cycling on the example of $S=100$ and the right one the ones for chaotic cycling with $S258.15$.}
    \label{fig:local-norm-analysis}
\end{figure}

\begin{table}[h]
    \centering
    \begin{tabular}{|c||c|c|c|c|c|c|}
    \hline
        & \multicolumn{3}{c|}{S=100} & \multicolumn{3}{c|}{S=258.15} \\ \hline
        \thead{Scheme} & \thead{min time} & \thead{mean time} & \thead{max time} & \thead{min time} & \thead{mean time} & \thead{max time} \\ \hline\hline
        ISIE & 296.279 ms & 299.401 ms & 307.805 ms & 263.207 ms & 266.759 ms & 273.071 ms \\
        ICN & 467.571 ms & 473.274 ms & 479.486 ms & 405.481 ms & 415.413 ms & 453.097 ms\\
        ISV & 428.761 ms & 435.463 ms & 450.706 ms & 393.316 ms & 396.468 ms & 401.683 ms\\
        MMRK4 & 434.206 ms & 451.805 ms & 472.120 ms & 403.617 ms & 414.823 ms & 429.899 ms\\
        IRK4 & 1.149 s & 1.153 s & 1.160 s & 972.086 ms & 1.065 s & 1.135 s\\
        PIDICN & 365.669 ms & 371.045 ms & 382.652 ms & 128.728 ms & 144.721 ms & 203.532 ms\\
        AICN & 929.919 ms & 937.354 ms & 954.432 ms & 689.332 ms & 743.694 ms & 823.602 ms\\
        ARK4 & 61.272 ms s & 64.606 ms & 70.739 ms & 51.398 ms & 60.307 ms & 89.615 ms \\
        AIRK4  & 392.592 ms & 398.475 ms & 413.562 ms & 373.588 ms & 397.572 ms & 434.646 ms\\ \hline
    \end{tabular}
    \caption{Runtimes of the different algorithms for the interval $[0,10000]$.
     The computations were carried out on an Intel Core i5-7200U.}
    \label{tab:benchmark}
\end{table}

\begin{table}[h]
    \centering
    \begin{tabular}{|c||c|c|c|c|}
    \hline
        & \multicolumn{2}{c|}{S=100} & \multicolumn{2}{c|}{S=258.15} \\ \hline
        \thead{Scheme} & $\norm{{\bf u}}_{L_2[0, 9999]} $ & $\norm{\frac{d {\bf u}}{d t}}_{L_2[0, 9999]}$ & $\norm{{\bf u}}_{L_2[0, 9999]} $ & $\norm{\frac{d {\bf u}}{d t}}_{L_2[0, 9999]}$ \\ \hline\hline
        ISIE & 1779.1042482178611 & 763.6817995346803 & 2074.99633068202 & 551.5939982581531   \\
        ICN & 1779.6596093842356 & 765.6633264308293 & 2080.081103110141 & 561.9538618698593  \\
        ISV & 1779.8227558934245 & 764.772539199622 & 2072.520439246565 & 548.0642069057208  \\
        MMRK4 & 1779.4729101969149 & 764.9818721864283 & 2066.8111537452933 & 538.8410380757709 \\
        IRK4 & 1779.659878381908 & 765.6883397814581 & 2094.416156306047 & 584.595355425552 \\
        PIDICN & 1778.7230770630867 & 765.3645009066148 & 2086.96979063115 & 620.090939713537 \\
        AICN & 1779.6596093842356 & 765.6633264308293 & 2080.081103110141 & 561.9538618698593 \\
        ARK4 & 1772.6310255573324 & 764.9749934137184 & 2078.7562825863693 & 565.8865895392348 \\
        AIRK4 & 1773.8147243055737 & 765.2310905681921 & 2068.84774687211 & 546.2333427143521 \\ \hline
    \end{tabular}
    \caption{A tabular view of the previously defined global norms for all solvers. 
    The left side of the table contains the stable cycling case $S=100$ while the right side contains the chaotic cycling case $S=258.15$.}
    \label{tab:global-norm-analysis}
\end{table}

We further capture statistical features of the adaptive schemes fluctuations by computing the expectation and variance as follows, assuming that the same point does not lie on the approximation twice for our chosen time intervals:
\begin{itemize}
\item Expectation:
$$
\mathbb{E}\left[\norm{{\bf u}}_{L_2[t^{n_1}, t^{n_2}]}\right] = \frac{1}{n_2 - n_1}  \sum_{n = n_1}^{n_2}  \norm{{\bf u}}_{L_2[t^n, t^{n+1}]}
$$
\item Variance:
$$
\mathbb{V}\left[\norm{{\bf u}}_{L_2[t^{n_1}, t^{n_2}]}\right] = \frac{1}{n_2 - n_1}  \sum_{n = n_1}^{n_2} \left( \norm{ {\bf u}}_{L_2[t^n, t^{n+1}]}  - \mathbb{E}\left[\norm{{\bf u}}_{L_2[t^{n_1}, t^{n_2}]}\right] \right)^2
$$
\end{itemize}
where $t^{n_1} < t^{n_2}$.
The results can be found in table (\ref{tab:statistical-features}).
We can take from these tables that the adaptive RK4 schemes (\ref{algo:ark4} \& \ref{algo:airk4}) can handle larger time steps while bounding the local error.
Note that this does not help in the assumed chaotic case, as in chaos nearby trajectories diverge with exponential speed.
Still, with a small enough error bound we are able to somewhat bound the global error for small time intervals.

\begin{table}[h]
    \centering
    \begin{tabular}{|c||c|c|c|c|}
    \hline
        & \multicolumn{2}{c|}{S=100} & \multicolumn{2}{c|}{S=258.15} \\ \hline
        \thead{Scheme} & $\mathbb{E}\left[\norm{{\bf u}}_{L_2[0, 9999]}]\right]$ & $\mathbb{V}\left[\norm{{\bf u}}_{L_2[0, 9999]}\right]$ & $\mathbb{E}\left[\norm{{\bf u}}_{L_2[0, 9999]}\right]$ & $\mathbb{V}\left[\norm{{\bf u}}_{L_2[0, 9999]}\right]$ \\ \hline\hline
        PIDICN & 1.53167 & 3.94941 & 1.56736 & 8.5948\\
        AICN & 1.10589 & 1.3043 & 0.934257 & 0.936426 \\
        ARK4 & 3.64771 & 14.896 & 2.74606 & 11.2035  \\
        AIRK4 & 3.33507 & 12.3004 & 2.60268 & 9.33504
 \\ \hline
    \end{tabular}
    \caption{A tabular view of the statistical features for the adaptive solvers. 
    The left side of the table contains the stable cycling case $S=100$ while the right side contains the chaotic cycling case $S=258.15$.}
    \label{tab:statistical-features}
\end{table}

\section{Conclusion}
\label{concl}

We started by giving a definition for Hodgkin-Huxley type systems and some characteristics.
Informally we refer to Hodgkin-Huxley type systems as differential equations of a special class of potentially oscillating reaction-diffusion type systems with local activation and inactivation mechanisms.
Based on different assumptions about these systems we derived some solvers and took an characteristic example, whose structure has been analyzed computationally.
We found a potential period doubling and period halving around an unstable regime, which we assume to be chaotic.
This chaotic regime has been examined further computationally, exposing a spiral structure via special Poincaré section from computational neuroscience called interspike interval bifurcation, which is not visible in the vanilla Poincaré section.
The hereby taken approach can be seen as a basic framework for to guide numerical analyses of solvers for Hodgkin-Huxley type systems. 

All solvers agree on the basic spiral structure, excepting the PIDICN which was unable to unravel higher windings, yielding much noise across the diagram in this area for the given parametrization.
We also observed that the computations of the interspike interval bifurcations with adaptive higher order solvers lead to less noisy looking structures. 
This gives us pointers that these solvers, while not agreeing on solutions due to the potential chaos, still somewhat preserve the character of solutions.

In the future we look forward to analyze stochastic definitions of Hodkin-Huxley type systems.
We also plan to examine geometrical and dynamical properties of the interspike interval bifurcation rigorously, providing a better foundation to understand the properties of numerical solvers for these kind of systems.

\bibliographystyle{plain}

\end{document}